\documentclass[12pt]{article}
\usepackage{amssymb, amsmath, latexsym, a4}
\usepackage[latin1]{inputenc}
\usepackage{graphicx}
\usepackage[all]{xy}
\usepackage[usenames]{color}

\newenvironment{proof}[1][Proof]{\noindent\textbf{#1.} }{\ \rule{0.5em}{0.5em}}

\newtheorem{De}{Definition}[section]
\newtheorem{Th}[De]{Theorem}
\newtheorem{Pro}[De]{Proposition}
\newtheorem{Le}[De]{Lemma}
\newtheorem{Co}[De]{Corollary}
\newtheorem{Rem}[De]{Remark}
\newtheorem{Ex}[De]{Example}

\newcommand{\A}{\ensuremath{\mathcal{A}}}

\newcommand{\Lieg}{\ensuremath{\mathfrak{g}}}

\newcommand{\LieK}{\ensuremath{\mathcal{K}}}

\newcommand{\LieH}{\ensuremath{\mathcal{H}}}

\newcommand{\LieI}{\ensuremath{\mathfrak{I}}}

\newbox\pullbackbox
\setbox\pullbackbox=\hbox{\xy 0;<1mm,0mm>: \POS(4,0)\ar@{-} (0,0)
\ar@{-} (4,4)
\endxy}

\newdir{>>}{{}*!/3.5pt/:(1,-.2)@^{>}*!/3.5pt/:(1,+.2)@_{>}*!/7pt/:(1,-.2)@^{>}*!/7pt/:(1,+.2)@_{>}}
\newdir{ >>}{{}*!/8pt/@{|}*!/3.5pt/:(1,-.2)@^{>}*!/3.5pt/:(1,+.2)@_{>}}
\newdir{ |>}{{}*!/-3.5pt/@{|}*!/-8pt/:(1,-.2)@^{>}*!/-8pt/:(1,+.2)@_{>}}
\newdir{ >}{{}*!/-8pt/@{>}}
\newdir{>}{{}*:(1,-.2)@^{>}*:(1,+.2)@_{>}}
\newdir{<}{{}*:(1,+.2)@^{<}*:(1,-.2)@_{<}}

\newcommand{\g}{\frak g}

\begin{document}
\title{\bf Descending Chain Conditions on
Leibniz Algebras.}
\author{Calvin Tcheka$^{1},$  Guy R. Biyogmam$^{2},$\\ Bell Bogmis N. G$^{3}$,
Batkam Mbatchou V. Jacky III$^{4}$.}
\maketitle

\bigskip
\centerline{$^{1}$ Department of Mathematics, Faculty of
science-University of Dschang} \centerline{Campus Box (237)67
Dschang, Cameroon} \centerline{ {E-mail address}:
calvin.tcheka@univ-dschang.org}
\bigskip
\centerline{\small $^{2}$ Department of Mathematics, Georgia College
and State University, USA.} \centerline{Campus Box 17,
Milledgeville} \centerline{ {E-mail address}: guy.biyogmam@gcsu.edu}
\bigskip
 \centerline{\small $^{3}$ Department of Mathematics, Faculty of sciences-University of Dschang}
  \centerline{Campus Box (237)67 Dschang, Cameroon}
\centerline{ {E-mail address}: bellnarcisse3@gmail.com}
\bigskip
 \centerline{\small $^{4}$ Department of Mathematics, Faculty of science-University of Dschang}
    \centerline{Campus Box (237)67 Dschang, Cameroon}
     \centerline{ {E-mail address}: batkamjacky3@yahoo.com }
\bigskip

\date{}

\bigskip \bigskip \bigskip

{\bf Abstract:}

In this work, we introduce a new class of Leibniz algebras, named
quasi-Artinian Leibniz algebras, which generalizes the minimal
condition on ideals. Furthermore, we provide some characterizations
 and give conditions under which a
quasi-Artinian Leibniz algebra is Artinian. Finally, within the
framework of Leibniz algebras, we establish a connection between
prime ideals and the quasi-Artinian structure.

\bigskip

{\bf 2010 MSC:} 17A32, 17A36, 17B40, 18B99.

\bigskip

{\bf Keywords:} Descending Chain Condition on Leibniz algebra,
semisimple Leibniz algebra, Prime Leibniz algebra, Prime ideals.\\
\bigskip

\section{Introduction}
The concept of Leibniz algebra originated in works published in 1965 by Bloh \cite{b1}, and did
not receive significant attention until it was independently rediscovered by Jean Louis Loday \cite{LD}. Essentially, A Leibniz algebra is a
vector space equipped with a binary operation which has the property of being a derivation of
itself. Consequently, Leibniz algebras  extend the class of  Lie algebras,
and much of the research on Leibniz algebras focuses on developping
analogues of results from Lie theory on Leibniz algebras.

This paper studies the Artinian conditions on Leibniz algebras. The concept of Artinian algebras plays an important role in the study of
infinite-dimensional algebraic structures. It originated in the foundational work of
E. Artin and E.  Noether whose work on
chain conditions laid the groundwork for modern ring theory. In particular, Noether's introduction \cite{EN}
of the ascending chain condition initiated the study of finiteness
conditions in algebra and  Artin's examination   \cite{EA} of descending chain conditions
led to tools useful in controlling the ideal structure of rings \cite{A_{1}, key-3, B_{1}, key-5, C_{1}}.
 These conditions have become central in extending
finiteness methods  to more general algebraic systems ,
including Lie algebras, especially in the infinite-dimensional case \cite{key-1, D_{2}, D_{4}, key-10, key-13}.

 An Artinian Leibniz algebra is defined as a Leibniz algebra that satisfies the descending
chain condition on both left and right ideals. This provides a useful framework for understanding their structure, particularly in the infinite-dimensional case. As for many algebraic structures, this advantage can be extended to a broader class of Leibniz algebras by considering a suitable generalization of the Artinian condition. This motivates the introduction in this paper of the class of quasi-Artinian Leibniz
algebras, which generalizes both Artinian Leibniz algebras (\cite{OSBOR}, \cite{key-10}) and quasi-Artinian Lie algebras \cite{key-15}, and is shown to include important classes of Leibniz algebras, such as solvable algebras.  We investigate several results analogous to those known for quasi-Artinian Lie algebras.


The paper is organized as follows. In Section 2, we recall the necessary
definitions and set the notations used throughout the paper.
In Section 3,
we introduce the notion of quasi-Artinian Leibniz algebra in the category of Leibniz algebras and provide some examples and characterizations of these algebras . Among the results obtained, we show that the class of quasi-Artinian Leibniz algebras is closed under
quotients but not under extensions.
 In Section 4, we study the connection  between the subclass of Leibniz algebras satisfying the minimal condition and
  the subclass of Leibniz algebras satisfying the minimal condition on abelian ideals. Finally,
 in section 5,  we investigate infinite dimensional Leibniz algebras with chain condition(CC) on ideals of ideals.

\section{Preliminaries on Leibniz algebras}

We follow the notations and terminologies in $\cite{key-1}$, and
some of the notions are equally found in  $\cite{key-10}$ and
$\cite{key-11}$. For convenience, we recall some preliminaries.
\begin{De}
    A  \emph{ (left) Leibniz algebra}  is a vector space ${\Lieg}$  equipped with a bilinear map $[-,-] : \Lieg \otimes \Lieg \to \Lieg$,
    usually called \emph{Leibniz bracket},  satisfying the left \emph{Leibniz identity}:
    \[
    [x,[y,z]]= [[x,y],z]+[y,[x,z]],
    \]
    for all $x, y, z \in \Lieg.$
\end{De}

\indent Let $\Lieg$ be a Leibniz algebra of arbitrary dimension.
For a given ordinal $\alpha$, $\Lieg^{\left(\alpha\right)}$ denotes
the $\alpha$-th terms of the (transfinite) derived series defined
inductively by:
$\Lieg=\Lieg^{\left(0\right)},\Lieg^{\left(\alpha+1\right)}=\left[\Lieg^{\left(\alpha\right)},\Lieg^{\left(\alpha\right)}\right]$
and
$\Lieg^{\left(\rho\right)}=\underset{\alpha<\rho}{\cap}\Lieg^{\left(\alpha\right)}$
for every limit ordinal $\rho.$\\  The right and the left centers of
$\Lieg$ are respectively defined as follows:
$$Z^{r}_{\Lieg} = \{x \in \Lieg: [y, x] = 0, ~ \mbox{for each
element}~ y\in \Lieg\},$$
$$Z^{l}_{\Lieg} = \{x \in \Lieg: [x, y] = 0, ~ \mbox{for each
element}~ y\in \Lieg\}.$$ Thus the center of $\Lieg$ is given by:
$Z_{\Lieg}= Z^{r}_{\Lieg} \cap Z^{l}_{\Lieg}.$  \\
 Starting from the notion of the center of the Leibniz algebra $\Lieg$, for a given ordinal $\alpha,$ we denote by $\zeta_{\alpha}$,
the $\alpha$-th term of the transfinite upper central series of
$\Lieg:$ $\zeta_{0}(\Lieg)\subseteq \zeta_{1}(\Lieg) \subseteq
\zeta_{2}(\Lieg)\subseteq \cdots \subseteq \zeta_{\lambda}(\Lieg)=
\zeta_{\infty}(\Lieg )$ and define it inductively as follows:\\
$\zeta_{0}(\Lieg)= 0, \zeta_{1}(\Lieg) = Z_{\Lieg},$
$\zeta_{\alpha+1}\left(\Lieg \right)/\zeta_{\alpha}\left(\Lieg
\right)=\zeta_{1}\left(\Lieg /\zeta_{\alpha}\left(\Lieg
\right)\right)$ and $\zeta_{\rho}\left(\Lieg
\right)=\underset{\alpha<\rho}{\cup}\zeta_{\alpha}\left(\Lieg\right),$
for every limit ordinal $\rho$. The last term, $\zeta_{\infty}(\Lieg ),$
of this series is called the upper hypercenter of $\Lieg$ while the
hypercenter  of $\Lieg$ is $\zeta_{\ast}\left(\Lieg \right)=
\underset{\alpha}{\cup}\zeta_{\alpha}\left(\Lieg \right).$ A Leibniz
algebra $\Lieg$ is said to be hypercentral  if it coincides with its
hypercenter.\\
\indent Denote respectively by $\leq$ and $\vartriangleleft$ the
relation  of subalgebra and that of ideal of $\Lieg,$ and let
$\mathfrak{X}$ be the collection of Leibniz algebras up to
isomorphism, together with the $0$-dimensional Leibniz algebra.
Define the following classes:
\begin{equation*}
\begin{aligned}
{}{S}\mathfrak{X}&: = \{ H: \exists \Lieg \in \mathfrak{X} ~ \mbox{and} ~ H \leq \Lieg \}. \\
{}{E}\mathfrak{X}& := \{ \Lieg: \exists \quad
0=\Lieg_{0}\vartriangleleft\cdots\vartriangleleft
 \Lieg_{n}=\Lieg, \quad \mbox{a finite series,}  \quad
 \quad \\ & ~ \quad
\mbox{such that} \quad \Lieg_{i+1}/\Lieg_{i}\in\mathfrak{X}, \quad
\mbox{for} \quad 0\leq i\leq n-1\}. \\
{}{R}\mathfrak{X}& :=~ \{ \Lieg: \exists \quad
\left(I_{\alpha}\right)_{\alpha\in\varLambda}, \quad
I_{\alpha}\vartriangleleft \Lieg, \quad
\Lieg/I_{\alpha}\in\mathfrak{X}\\ & ~~ \quad \mbox{for all}\quad
\alpha \in \varLambda \quad \mbox{and}\quad
\underset{\alpha\in\varLambda}{\cap}I_{\alpha}= Leib(\Lieg)\}.\\
{}\mathfrak{X}^{Q}& :=~ \{ \Lieg: \quad \forall I\vartriangleleft
\Lieg, \quad
\Lieg/I\in\mathfrak{X}\}.\\
\end{aligned}
\end{equation*}
We say that a class $\mathfrak{X}$ of Leibniz algebras is $E$-$closed$
(respectively $R$-$closed$) if ${E}\mathfrak{X}=\mathfrak{X}$
(respectively ${R}\mathfrak{X}=\mathfrak{X}$).

  Throughout the remainder of this paper, we adopt the following notation for these classes of Leibniz
  algebras:
\begin{equation*}
\begin{aligned}
\mathfrak{U} &  := ~ abelian.\\
\mathfrak{U}^{d} & :=~ solvable ~ of ~ derived ~ length \leq d.\\
\mathfrak{N} &  := ~nilpotent.\\
\mathfrak{N}_{c} &:=~ nilpotent ~ of ~ class \leq c.\\
E\mathfrak{U} & := ~ solvable.\\
\;{\L}\mathfrak{N}&:= ~ locally ~ nilpotent.\\
  \mathfrak{3}& := ~Leibniz ~ algebra ~\Lieg ~such~ that ~\zeta_{\ast}\left(\Lieg
  \right)=\Lieg.\\
\end{aligned}
\end{equation*}
\begin{Rem}\label{22}
\begin{enumerate}
 \item  If $\mathfrak{R}$ denotes the subclass of Leibniz algebras $\Lieg$ whose
elements satisfy: $ x\in \Lieg$ implies $x=0$ or
$x\notin\left[x,\Lieg \right] \cup \left[\Lieg,x \right]$. Then
${\L}\mathfrak{N}\subseteq\mathfrak{R}$.

\item Moreover if one denotes by $\;\min\mbox{-}\vartriangleleft$ the class
of Leibniz algebras together with the minimal condition, then we
have the following subclasses:
\begin{equation*}
\begin{aligned}
\min\mbox{-}\vartriangleleft{}_{E\mathfrak{U}}&:= ~ the ~ subclass ~ of
~ Leibniz ~ algebras ~satisfying ~ the ~ minimal ~ condition \\
&~
~on ~ solvable ~ ideals.\\
\min\mbox{-}\vartriangleleft\mathfrak{U}&:= ~ the ~ subclass ~ of ~ Leibniz
~ algebras ~satisfying ~ the ~ minimal ~ condition
\\&~~ on ~ abelian ~
ideals.\\
\end{aligned}
\end{equation*}
These two subclasses are related by:
$\min\mbox{-}\vartriangleleft{}{E}\mathfrak{U} \subsetneq
\min\mbox{-}\vartriangleleft\mathfrak{U}$ since every abelian two-sided ideal is solvable, but the converse is not true.
\end{enumerate}
\end{Rem}
\section{Quasi-Artinian structure on Leibniz algebras.}
\subsection{Definitions and some characterizations}
Hereafter, we extend the well-known notion of the minimal condition on ideals from Lie algebras to the category of Leibniz algebras, and we provide several characterizations of Leibniz algebras satisfying this condition.


\begin{De}
    A Leibniz algebra $\Lieg$ is said to be a left (resp. right) \textit{quasi-Artinian} if, for every descending chain of ideals
    \begin{equation*}
    I_{1}\supseteq I_{2}\supseteq \cdots \supseteq I_{t}\supseteq \cdots,
       \end{equation*}
    there exists $ m_{l} ~ (\mbox{resp.} ~ m_{r}) \in \mathbb{N} $ such
    that for all $n\in \mathbb{N},$
    \begin{equation*}
        \left[\Lieg^{(m_{l})}, I_{m_{l}}\right] \subseteq I_{n } ~(resp. \left[I_{m_{r}}, \Lieg^{(m_{r})}\right] \subseteq I_{n}).
    \end{equation*}
    We say that a Leibniz algebra $\Lieg$ is \textit{quasi-Artinian} if it is both left and right \textit{quasi-Artinian}.
\end{De}

\noindent\textbf{Notations:}
The class of left (resp.\ right) quasi-Artinian Leibniz algebras is denoted by
$q\min\text{-}\lhd_{l}$ (resp.\ $q\min\text{-}\lhd_{r}$), and the class of all quasi-Artinian Leibniz algebras is denoted by $q\min\text{-}\lhd$.

Hereafter, we present some observations and characterizations of quasi-Artinian Leibniz algebras.

\begin{Rem}
\begin{enumerate}
\item A Leibniz algebra $\Lieg$ is said to be Artinian if any decreasing
sequence of ideals of $\Lieg$ terminates. Consequently any Artinian
Leibniz algebra is quasi-Artinian.
\item Any finite dimensional Leibniz algebra is Artinian, and hence quasi-Artinian.
\item Let $I$ be a two-sided ideal of a Leibniz algebra $\g$ such
that $\frac{\Lieg}{I}$ is Artinian, then $\Lieg$ is quasi-Artinian.
\item A Leibniz algebra $\Lieg$ is  quasi-Artinian if and
only if for every descending chain
    of ideals, $$I_{1}\supseteq I_{2}\supseteq \cdots \supseteq I_{t}\supseteq
    \cdots,$$
    of $\Lieg,$ there exists positive integer $s$ such that:
    $$\left\{\begin{array}{lll}
    \left[\Lieg^{\left(s\right)},I_{s}\right]\subseteq\underset{i\geq1}{\cap}I_{i}\\
    \\
    \left[I_{s},\Lieg^{\left(s\right)}\right]\subseteq\underset{i\geq1}{\cap}I_{i}
    \end{array}
    \right.$$
     \end{enumerate}

\end{Rem}

\begin{Pro}

    A Leibniz algebra $\Lieg$ is quasi-Artinian if and only if there exists $I$, a two-sided ideal of $\Lieg$ such that $\Lieg/I $ is solvable
    and the sets of ideals $ \{[\mathcal{H},I] :
\mathcal{H} \lhd \Lieg\} $ and $ \{[I,\mathcal{H}] : \mathcal{H}
\lhd \Lieg\} $ satisfy the minimal condition.
\end{Pro}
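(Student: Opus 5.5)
Both directions use $I=\Lieg^{(m)}$ for a suitable $m$, imitating the Lie-algebra argument for quasi-Artinian Lie algebras \cite{key-15}. The key facts are that each $\Lieg^{(m)}$ is a two-sided ideal of $\Lieg$ and that $\Lieg/\Lieg^{(m)}$ is solvable of derived length at most $m$. I also use that for two-sided ideals $\mathcal{H},\mathcal{K}$ the brackets $[\mathcal{H},\mathcal{K}]$ and $[\mathcal{K},\mathcal{H}]$ are again two-sided ideals; this follows from the left Leibniz identity, and I would check it briefly at the start. Throughout I use item 4 of the preceding Remark, which lets me replace the condition ``$\subseteq I_n$ for all $n$'' by ``$\subseteq \bigcap_i I_i$''.

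\emph{Sufficiency.} Let $I$ be as in the statement, with $\Lieg/I$ solvable of derived length $d$, so that $\Lieg^{(d)}\subseteq I$. Take a descending chain $I_1\supseteq I_2\supseteq\cdots$ of ideals. Then $[I,I_1]\supseteq[I,I_2]\supseteq\cdots$ is a descending chain in the family $\{[I,\mathcal{H}]\}$, so it stabilizes from some index $k$ on. The same holds for the chain $[I_t,I]$. Let $s\ge \max(k,d)$ exceed both stabilization indices. For every $n\ge s$ we get
\[
[\Lieg^{(s)},I_s]\subseteq [I,I_s]=[I,I_n]\subseteq I_n ,
\]
and for $n<s$ the inclusion $I_s\subseteq I_n$ handles it. The right-hand bracket is treated symmetrically. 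Hence $\Lieg$ is both left and right quasi-Artinian.

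\emph{Necessity.} This is the main obstacle, because the quasi-Artinian index $m$ depends on the chain, while we need a single $I$ that works for all chains. First I claim the derived series stabilizes at a finite stage. Apply the definition to the chain $I_t=\Lieg^{(t)}$. This gives $s$ with $[\Lieg^{(s)},\Lieg^{(s)}]\subseteq \Lieg^{(n)}$ for all $n$. Hence $\Lieg^{(s+1)}\subseteq\bigcap_n\Lieg^{(n)}\subseteq\Lieg^{(s+1)}$, so $\Lieg^{(s+1)}=\Lieg^{(s+2)}=\cdots$. Set $I=\Lieg^{(s+1)}$. Then $\Lieg/I$ is solvable, and $\Lieg^{(m)}=I$ for all $m\ge s+1$.

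Now suppose $[I,\mathcal{H}_1]\supsetneq[I,\mathcal{H}_2]\supsetneq\cdots$ were a strictly descending chain. I would first replace $\mathcal{H}_t$ by $\mathcal{H}'_t=\mathcal{H}_1\cap\cdots\cap\mathcal{H}_t$. This makes the $\mathcal{H}'_t$ descending, but it may shrink $[I,\mathcal{H}'_t]$, so to avoid the issue I work directly with a descending chain of the $\mathcal{H}$'s. This is legitimate because minimality only needs to be checked along chains, and a chain in the family can be indexed by descending $\mathcal{H}_t$ after this intersection trick, noting that $[I,\mathcal{H}'_t]\subseteq[I,\mathcal{H}_t]$ still gives a non-stabilizing chain; if necessary I would phrase the argument via minimal elements. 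Apply quasi-Artinianity to the chain $J_t=[I,\mathcal{H}_t]\cap\mathcal{H}_t$, or more simply to the chain $\mathcal{H}_t$ together with the chain $[I,\mathcal{H}_t]$. This yields $m$ with $[\Lieg^{(m)},\mathcal{H}_m]\subseteq\mathcal{H}_n$ for all $n$, and we may enlarge $m$ beyond $s+1$. The delicate point is to get $[I,\mathcal{H}_m]\subseteq[I,\mathcal{H}_n]$ rather than just $\subseteq\mathcal{H}_n$. I would attempt this by using $I=[I,I]$ and the Leibniz identity to rewrite $[I,\mathcal{H}_m]=[[I,I],\mathcal{H}_m]\subseteq[I,[I,\mathcal{H}_m]]+[I,[I,\mathcal{H}_m]]$, and then apply the quasi-Artinian inclusion to the inner bracket. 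This gives $[I,\mathcal{H}_m]\subseteq[I,\mathcal{H}_n]$, forcing stabilization. The family $\{[\mathcal{H},I]\}$ is handled symmetrically via the right condition.
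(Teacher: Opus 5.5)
Your sufficiency argument is correct and is the paper's argument (your $\Lieg^{(d)}\subseteq I$ is more accurate than the paper's $\Lieg^{(n)}=I$). Your proof that the derived series stabilizes at a finite stage, giving a perfect $I=\Lieg^{(s+1)}$, is also correct. The necessity part has two problems.

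\textbf{The reduction to descending chains.} Your reduction to a descending chain of $\mathcal{H}$'s does not work as written. From $[I,\mathcal{H}'_t]\subseteq[I,\mathcal{H}_t]$ nothing follows about non-stabilization: if, say, $\mathcal{H}_1\cap\mathcal{H}_2=0$, the chain $[I,\mathcal{H}'_t]$ is $0$ from $t=2$ on. This is repairable. Replace $\mathcal{H}_t$ by $\widehat{\mathcal{H}}_t$, the sum of all ideals $\mathcal{H}$ with $[I,\mathcal{H}]\subseteq[I,\mathcal{H}_t]$. Then $[I,\widehat{\mathcal{H}}_t]=[I,\mathcal{H}_t]$ and $\widehat{\mathcal{H}}_{t+1}\subseteq\widehat{\mathcal{H}}_t$. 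With that fix, your computation $[I,\mathcal{H}_m]=[[I,I],\mathcal{H}_m]\subseteq[I,[I,\mathcal{H}_m]]\subseteq[I,\mathcal{H}_n]$ is valid and settles the family $\{[I,\mathcal{H}]\}$.

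\textbf{The family $\{[\mathcal{H},I]\}$.} This is the genuine gap: this family is not ``symmetric''. The left Leibniz identity is one-sided, and here it gives
\[
[\mathcal{H}_m,I]=[\mathcal{H}_m,[I,I]]\subseteq[[\mathcal{H}_m,I],I]+[I,[\mathcal{H}_m,I]].
\]
Applying quasi-Artinianity to the chain $\mathcal{H}_t$ yields only $[\mathcal{H}_m,I]\subseteq\mathcal{H}_n$. With that, the first summand lies in $[\mathcal{H}_n,I]$, but the second lies only in $[I,\mathcal{H}_n]$, which is the other family. There is no one-sided identity here like $[[a,b],h]=[a,[b,h]]-[b,[a,h]]$, which is what made your treatment of $\{[I,\mathcal{H}]\}$ work.

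The paper avoids this by applying the quasi-Artinian condition, in both its left and right forms, to the descending chain $K_i=[\mathcal{H}_i,I]$ itself rather than to the $\mathcal{H}_i$. Then $[K_m,I]+[I,K_m]\subseteq K_i$ absorbs both summands and gives $K_m\subseteq K_i$. No reduction to descending $\mathcal{H}$'s is needed on that route.

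That device needs the $K_i$ to be ideals. The statement presupposes this (``sets of ideals''), but your proposed ``brief check'' would not deliver it. The identity only gives $[[h,k],x]=[h,[k,x]]-[k,[h,x]]\in[\mathcal{H},\mathcal{K}]+[\mathcal{K},\mathcal{H}]$. So in a left Leibniz algebra $[\mathcal{H},\mathcal{K}]$ is in general only a left ideal, and small hemisemidirect-product examples show it can fail to be a right ideal. Your sufficiency argument and your treatment of $\{[I,\mathcal{H}]\}$ do not use that claim, so you should drop it rather than rely on it.
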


\begin{proof}
 Assume that $\Lieg \in q\min\mbox{-}\lhd$ and set $I= \Lieg^{(\omega)}$ with $w$ the smallest limit ordinal. Then $I$ is a two-sided ideal of $\Lieg.$
 Since $(\Lieg^{(k)})_{k\geq 0}$ is a decreasing sequence of ideals of $\Lieg,$ there exists $n< w$ such that
$\Lieg^{(n+1)}\subseteq \Lieg^{(n)}=I$.
So $(\frac{\Lieg}{I})^{(n+1)}=0,$ and hence $\Lieg/I$ is solvable.
 Now consider the sequence of ideals $(\mathcal{H}_{i} \lhd \Lieg)_{i \geq 1}$ such that
     $[\mathcal{H}_{1},I] \supseteq [\mathcal{H}_{2},I] \supseteq \cdots \supseteq [\mathcal{H}_{n},I] \supseteq \cdots.$
Since $\Lieg \in q \min\mbox{-}\lhd$, there exists an integer $r \geq 1$ such that $[\Lieg^{(r)},[\mathcal{H}_{r},I]] \subseteq [\mathcal{H}_{i},I]$ and
  $[[\mathcal{H}_{r},I],\Lieg^{(r)}] \subseteq [\mathcal{H}_{i},I]$, for all $i\geq
  1$. Notice that $\omega$ is a limit ordinal, thus $[\Lieg^{(\omega)}, \Lieg^{(\omega)}]=I^{(1)}=
  \Lieg^{(\omega)}=I.$ Hence for $n<\omega,$ $I= \Lieg^{(n)}=
  \Lieg^{(\omega)}.$ So if $m= \mbox{max}(\{n, r\}),$ then
   $[\mathcal{H}_m, I]= [\mathcal{H}_m, [I, I]]\subseteq [I,[\mathcal{H}_m, I]]= [\Lieg^{(m)},[\mathcal{H}_m, I]]\subseteq [\mathcal{H}_i,
   I],$ for all $i \in \mathbb{N}.$ So the set $ \{[\mathcal{H},I] :
\mathcal{H} \lhd \Lieg\} $ satisfies the minimal condition. Similarly, one shows that $ \{[I,\mathcal{H}] : \mathcal{H} \lhd
\Lieg\} $ also  satisfies the minimal condition.\\
Conversely, let $I$ be a two-sided ideal of $\Lieg$ such that
$\Lieg/I $ is solvable and consider the descending chain of ideals
of $\Lieg$:
$\mathcal{J}_{1}\supseteq \mathcal{J}_{2} \supseteq \cdots \supseteq
\mathcal{J}_{n}\supseteq \cdots.$
Then there exists $n\in \mathbb{N}$ such that $\Lieg^{(n)}= I$ and
the set of ideals: $ \{[I,\mathcal{J}_{i}] : i\geq 1\}$ has a
minimal element $[I,\mathcal{J}_{s}].$ Set $m= \mbox{max}\{s, n\},$
we have $[\Lieg^{(m)},\mathcal{J}_{m}]\subseteq
[I,\mathcal{J}_{s}]\subseteq [I,\mathcal{J}_{i}]\subseteq
\mathcal{J}_{i}$, for all $ i\geq 1.$ That is $\Lieg$ is a left
quasi-Artinian Leibniz algebra. Similarly, one shows that
$\Lieg$ is a right quasi-Artinian Leibniz algebra. Therefore
$\Lieg$ is a quasi-Artinian Leibniz algebra.
\end{proof}

\subsection{On Quasi-Artinian Leibniz algebras} The main goal of this subsection
is to establish the connection between some fundamental
properties of Leibniz algebras and the minimal condition of ideals
on Leibniz algebras.

\begin{Pro}\label{Prop 3.5}
Let $\Lieg$ be a Leibniz algebra, then the following statements
hold.
\begin{enumerate}
\item If $\Lieg$ is solvable, then it is quasi-Artinian.
\item If $\Lieg$ is hypercentral and quasi-Artinian, then $\Lieg$ is solvable.
\end{enumerate}
\end{Pro}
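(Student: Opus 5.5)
For part (1), I will argue directly from the definition. If $\Lieg$ is solvable, there is $d$ with $\Lieg^{(d)}=0$. Given any descending chain of ideals $I_1\supseteq I_2\supseteq\cdots$, take $m_l=m_r=d$. Then
\[
[\Lieg^{(d)},I_d]=0 \quad\text{and}\quad [I_d,\Lieg^{(d)}]=0,
\]
and both are contained in every $I_n$. So $\Lieg$ is both left and right quasi-Artinian. Equivalently, one can apply the preceding Proposition with $I=0$: the quotient $\Lieg/0$ is solvable, and the families $\{[\mathcal H,0]\}$ and $\{[0,\mathcal H]\}$ consist only of $0$, so they trivially satisfy the minimal condition.

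For part (2), the plan is to use the preceding Proposition, or rather its proof. Put $I=\Lieg^{(\omega)}$. The derived series consists of ideals, so quasi-Artinianity applied to the chain $(\Lieg^{(k)})_{k\ge 0}$ gives an index $n$ with $\Lieg^{(n)}=\Lieg^{(\omega)}=I$. In particular $I$ is perfect, $[I,I]=I$, and $\Lieg/I$ is solvable. It therefore suffices to show $I=0$, which gives $\Lieg^{(n)}=0$.

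The key step is that a hypercentral Leibniz algebra has no nonzero perfect ideal. Suppose $I\neq 0$. Since $\Lieg=\zeta_\ast(\Lieg)=\bigcup_\alpha\zeta_\alpha(\Lieg)$, let $\alpha$ be the least ordinal with $I\cap\zeta_\alpha(\Lieg)\neq 0$. Because the upper central series is a union at limit ordinals, $\alpha$ is a successor, $\alpha=\beta+1$, and $I\cap\zeta_\beta(\Lieg)=0$. I then want to show that $I$ centralizes $\zeta_{\beta+1}$ modulo $\zeta_\beta$, i.e.
\[
[I,\zeta_{\beta+1}]+[\zeta_{\beta+1},I]\subseteq I\cap\zeta_\beta=0 .
\]
Hence $0\neq x\in I\cap\zeta_{\beta+1}$ is annihilated on both sides by $I$. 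Using perfectness, $I=[I,I]$, together with the Leibniz identity, I will push this to a contradiction. The most natural route is the standard Lie-theoretic fact that in a hypercentral algebra, $[I,\zeta_{\gamma+1}]\subseteq\zeta_\gamma\cap I$. Transfinite induction on $\gamma$ should then give $I\cap\zeta_\gamma\subseteq$ the annihilator structure, forcing $I\cap\zeta_\gamma=0$ for all $\gamma$, and hence $I=0$.

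An alternative, and probably cleaner, route is to pass to $\Lieg/\zeta$-quotients and use the known result (cf.\ \cite{key-1}) that a hypercentral Leibniz algebra is locally nilpotent and lies in the class $\mathfrak R$ of Remark \ref{22}. Perfectness then gives, for an element $x\in I\cap\zeta_{\beta+1}$, relations placing $x$ inside $[I,I]$. I would show that $[I,I]\cap\zeta_{\beta+1}\subseteq\zeta_\beta$, since brackets of $I$ with $\zeta_{\beta+1}$ land in $\zeta_\beta$.

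The main obstacle is this last step: deducing $I=0$ from perfectness plus hypercentrality. The difficulty is that perfectness is a global statement, while the central series controls brackets only with elements of $\zeta_{\beta+1}$, not arbitrary brackets in $I$. If the direct induction stalls, I will invoke the Leibniz analogue of the Lie-algebra result that hypercentral algebras satisfy $\Lieg^{(\omega)}\cap\zeta_\alpha$ decreasing arguments. More precisely, I will use that $I$ is itself hypercentral as an ideal (hypercentral in $\Lieg$), so its image in $I/[I,\Lieg]$ must be nonzero unless $I=0$. Since $I=[I,I]\subseteq[I,\Lieg]$, this yields $I=0$.
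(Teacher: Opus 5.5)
Part (1) is correct and is the paper's argument. Your reduction in part (2) also matches the paper's: quasi-Artinianity applied to the derived series gives $\Lieg^{(n)}=\Lieg^{(\omega)}=I$ with $I=[I,I]$. The paper then only asserts that hypercentrality forces $\Lieg^{(\omega)}=0$, and you rightly isolate the lemma actually needed: a hypercentral Leibniz algebra has no nonzero perfect ideal. However, none of the arguments you sketch proves this lemma, so there is a genuine gap.

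Your three routes fail as follows.

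First route: the least $\alpha$ with $I\cap\zeta_\alpha(\Lieg)\neq 0$ is always $\alpha=1$. Indeed, if $\beta\ge 1$, any $x\in I\cap\zeta_{\beta+1}$ has $[x,\Lieg]+[\Lieg,x]\subseteq I\cap\zeta_\beta=0$, so $x\in I\cap\zeta_1\subseteq I\cap\zeta_\beta=0$. All you obtain is a nonzero element of $I\cap Z(\Lieg)$, which is compatible with $I=[I,I]$ (perfect algebras with nonzero centre exist). So your induction ``forcing $I\cap\zeta_\gamma=0$'' has no base case.

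Second route: the inclusion $[I,I]\cap\zeta_{\beta+1}\subseteq\zeta_\beta$ is a non sequitur, since $[I,I]$ involves arbitrary pairs from $I$, not brackets with $\zeta_{\beta+1}$. Local nilpotence cannot rescue it by itself, because perfect locally nilpotent Lie algebras exist (McLain's).

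Fallback: the claim ``$[I,\Lieg]\neq I$ for every nonzero ideal'' is a hypocentrality-type condition and is false for hypercentral algebras. Take the basis $t,e_1,e_2,\dots$ with $[t,e_{i+1}]=e_i=-[e_{i+1},t]$ and all other brackets zero. Then $\zeta_k(\Lieg)=\langle e_1,\dots,e_k\rangle$, $\zeta_\omega(\Lieg)=A:=\langle e_i: i\ge 1\rangle$ and $\zeta_{\omega+1}(\Lieg)=\Lieg$, yet $[A,\Lieg]=A\neq 0$.

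The missing idea is a Leibniz form of Grün's lemma: if $I=[I,I]$ is an ideal, then $[I,\zeta_\gamma(\Lieg)]=[\zeta_\gamma(\Lieg),I]=0$ for every ordinal $\gamma$. Limit steps are unions. For the successor step, take $z\in\zeta_{\gamma+1}(\Lieg)$ and $a,b\in I$; the left Leibniz identity gives
\[
[z,[a,b]]=[[z,a],b]+[a,[z,b]],\qquad [[a,b],z]=[a,[b,z]]-[b,[a,z]].
\]
Since $[z,a],[z,b],[a,z],[b,z]\in\zeta_\gamma(\Lieg)$, every term on the right vanishes by the inductive hypothesis. As $I$ is spanned by such $[a,b]$, the claim passes to $\gamma+1$. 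Hypercentrality then yields $[I,\Lieg]=0$, so $I=[I,I]\subseteq[I,\Lieg]=0$ and hence $\Lieg^{(n)}=0$.

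The induction must be on annihilating $\zeta_\gamma$. An induction on ``$I\subseteq\zeta_\gamma\Rightarrow I\subseteq\zeta_1$'' would break down at limit ordinals.
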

\begin{proof}
Let $\Lieg$ be a Leibniz algebra.\\
 To prove the first statement, suppose that $\Lieg$ is solvable. Then there exists $m_0\in
\mathbb{N}$ such that $\Lieg^{(m_0)}=0$. For any descending chain of
ideals of $\Lieg,$ $I_{0} \supseteq I_{1} \supseteq I_{2} \supseteq
\cdots \supseteq I_{t} \supseteq \cdots$, one has $[\Lieg^{(m_0)},
I_{m_0}]= [I_{m_0}, \Lieg^{(m_0)}]=0 \in
    I_n,$ for all $n\in \mathbb{N}.$ So $\Lieg$ is a
    quasi-Artinian Leibniz algebra.\\
 To prove the second statement,  let $ \Lieg^{(1)} \supseteq \Lieg^{(2)} \supseteq
\cdots \supseteq \Lieg^{(t)} \supseteq \cdots$ be a descending chain
of ideals of a hypercentral Leibniz algebra $\Lieg$. Observe that
the transfinite  hypercentrality of $\Lieg$ implies
$\bigcap_{\alpha< \omega}\Lieg^{(\alpha)} = \Lieg^{(\omega)}=0$ with
$\omega$, the smallest limit ordinal.
Moreover if $\Lieg$ is quasi-Artinian, there
exists $m \in \mathbb{N}$ such that
$g^{(m+1)}=[\Lieg^{(m)},\Lieg^{(m)}] \subseteq \Lieg^{(n)}$ for all
$n \geq 0$. Therefore
    $\Lieg^{(1)} \supseteq \Lieg^{(2)} \supseteq \cdots \supseteq\Lieg^{(m)} \supseteq \Lieg^{(m+1)}=\Lieg^{(m+2)}=\cdots,$
and thus $g^{(m+k)}=\Lieg^{(w)}=0$ for ll $k\geq 1$. Hence  $\Lieg$ is
solvable.
\end{proof}

\begin{Co}
A Leibniz algebra $\Lieg$ is  quasi-Artinian if one of the following
assertion holds:
\begin{enumerate}
\item  $\Lieg$ has a solvable  ideal $H$ such that $\frac{\Lieg}{H}$ is solvable.

\item $\Lieg$ has finitely many subalgebras $\{H_{i}: 0\leq i\leq
n\} $ such that $0= H_{0}\vartriangleleft H_{1} \vartriangleleft
\cdots \vartriangleleft H_{n}= \Lieg$  and such that
$\frac{H_{i+1}}{H_{i}}$ is  abelian for $0\leq i\leq n-1.$
\end{enumerate}
\end{Co}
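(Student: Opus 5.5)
Both assertions reduce to showing that $\Lieg$ is solvable, after which Proposition \ref{Prop 3.5}(1) gives that $\Lieg$ is quasi-Artinian. So the plan is to establish solvability in each case by standard derived-series arguments, and then invoke that proposition.

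For assertion 1, suppose $H\lhd\Lieg$ is solvable with $H^{(k)}=0$, and $\Lieg/H$ is solvable with $(\Lieg/H)^{(d)}=0$.

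First, I will check by induction on $j$ that $(\Lieg/H)^{(j)}=(\Lieg^{(j)}+H)/H$. This holds because the bracket in the quotient is induced from $\Lieg$, so the image of $[\Lieg^{(j)},\Lieg^{(j)}]$ is the bracket of the images. Hence $\Lieg^{(d)}\subseteq H$.

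Second, I will show by induction on $i$ that $\Lieg^{(d+i)}\subseteq H^{(i)}$. The inductive step is monotonicity of the bracket: if $A\subseteq B$ then $[A,A]\subseteq[B,B]$. Taking $i=k$ gives $\Lieg^{(d+k)}=0$, so $\Lieg$ is solvable and Proposition \ref{Prop 3.5}(1) applies.

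One could alternatively try the preceding characterization with the ideal $I=H$. However, that would require the minimal condition on $\{[\mathcal{H},H]\}$, which is not available in general, so I will avoid that route.

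For assertion 2, I will prove by induction on $i$ that each $H_i$ is solvable.

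The case $H_0=0$ is trivial. For the inductive step, $H_i\lhd H_{i+1}$ with $H_i$ solvable by hypothesis and $H_{i+1}/H_i$ abelian, hence solvable. So the argument of assertion 1, applied inside the Leibniz algebra $H_{i+1}$, shows $H_{i+1}$ is solvable. Note that assertion 1's argument uses only that $H$ is an ideal of the ambient algebra, so it works in $H_{i+1}$ even though $H_i$ need not be an ideal of $\Lieg$. Concretely, $H_{i+1}^{(1)}\subseteq H_i$, so by induction $H_n^{(n)}=0$. Therefore $\Lieg=H_n$ is solvable, and Proposition \ref{Prop 3.5}(1) concludes.

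The main point needing care is the identification $(\Lieg/H)^{(j)}=(\Lieg^{(j)}+H)/H$ for Leibniz algebras, and the fact that $\Lieg^{(j)}$ is a two-sided ideal. The identification is just the surjectivity of the quotient map carried through the bracket. The ideal property is not even needed, since only containments of subspaces are used. I do not expect any genuine obstacle; the rest is routine.
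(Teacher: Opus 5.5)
Your proposal is correct and follows the same route as the paper: in both cases you show that $\Lieg$ is solvable, via the derived series relative to $H$, respectively by induction along $H_0\lhd\cdots\lhd H_n$, and then invoke Proposition \ref{Prop 3.5}(1). Your version is more careful than the paper's, since you use the containments $\Lieg^{(d)}\subseteq H$ and $H_{i+1}^{(1)}\subseteq H_i$, whereas the paper states these as equalities ($\Lieg^{(n)}=H$ and $\Lieg^{(1)}=H_{n-1}$), which need not hold.
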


\begin{proof}
On one hand, let $H$ be a solvable  two-sided ideal of a Leibniz algebra $\Lieg$ such that $\frac{\Lieg}{H}$ is
solvable. There exists $0 \neq n\in \mathbb{N}$ such that
$\Lieg^{(n)}= H$ and since $H$ is solvable, $\Lieg$ is also solvable
and the result follows by Proposition \ref{Prop 3.5}.\\
On the other hand, assume that $\Lieg$ has  finitely many subalgebras $\{H_{i}: 0\leq i\leq
n\} $ such that $0= H_{0}\vartriangleleft H_{1} \vartriangleleft
\cdots \vartriangleleft H_{n}= \Lieg$  and such that
$\frac{H_{i+1}}{H_{i}}$ is  abelian for $0\leq i\leq n-1.$ Then $H_{1}$
is obviously solvable since $\frac{H_{1}}{H_{0}}$ is abelian.  Now assume
that $H_{k}$ is solvable  for $2\leq i\leq n-1.$ Since
$\frac{H_{n}}{H_{n-1}}$ is abelian, then $\Lieg^{(1)}= H_{n-1}$ and
by the induction hypothesis, $\Lieg$ is solvable. The result follows again by Proposition \ref{Prop 3.5}.
\end{proof}

\begin{Th}
Let $\Lieg$ be a Leibniz algebra over an arbitrary field of characteristic different to two.
   The following assertions are equivalent:

\begin{enumerate}
    \item[(i)] $\Lieg$ is quasi-Artinian,
    \item[(ii)] There exists $m\in\mathbb{N}$ such that for every descending chain
    of ideals $I_{1}\supseteq I_{2}\supseteq\cdots$
 of \Lieg, the descending chains     of ideals $\left[\Lieg^{\left(m\right)},
 I_{1}\right]\supseteq\left[\Lieg^{\left(m\right)},I_{2}\right]\supseteq\cdots$
 and $\left[I_{1},\Lieg^{\left(m\right)}\right]\supseteq\left[I_{2},\Lieg^{\left(m\right)}\right]\supseteq\cdots$
    terminate.
    \item[(iii)] For any non-empty collection $\mathcal{C}$ of ideals of $\Lieg$, there
    exists $I \in \mathcal{C}$ and $m \in \mathbb{N}$ such that $\left[\Lieg^{\left(m\right)},I\right]
    \subseteq J$ and $\left[I,\Lieg^{\left(m\right)}\right]\subseteq J$
    for all $J \in \mathcal{C}$ with $J\subseteq I$.
\end{enumerate}\end{Th}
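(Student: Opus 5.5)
I would prove the equivalences along the cycle (i) $\Rightarrow$ (ii) $\Rightarrow$ (iii) $\Rightarrow$ (i), modelling each step on the Lie-algebra argument for quasi-Artinian algebras. The tool for the first step is the characterization proved above. If $\Lieg$ is quasi-Artinian, there is a two-sided ideal $I$ with $\Lieg/I$ solvable, such that the families $\{[\mathcal{H},I]:\mathcal{H}\lhd\Lieg\}$ and $\{[I,\mathcal{H}]:\mathcal{H}\lhd\Lieg\}$ satisfy the minimal condition. The proof of that proposition in fact produces $I=\Lieg^{(n)}$ for some finite $n$. I would take $m=n$, so that $\Lieg^{(m)}=I$, and check that this single $m$ is independent of the chain.

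\textbf{(i) $\Rightarrow$ (ii).} Let $I_1\supseteq I_2\supseteq\cdots$ be any descending chain of ideals. Then $[\Lieg^{(m)},I_k]=[I,I_k]$ and $[I_k,\Lieg^{(m)}]=[I_k,I]$ are descending chains in the two families above. Because these families satisfy the minimal condition, both chains terminate.

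One technical point has to be settled first. In a left Leibniz algebra, $[I,I_k]$ and $[I_k,I]$ are two-sided ideals whenever $I$ and $I_k$ are. I would verify this directly from the Leibniz identity, using the relation $[[x,y],z]=[x,[y,z]]-[y,[x,z]]$ together with the fact that $[\Lieg,\cdot]$ acts by derivations. The hypothesis on the characteristic (different from two) is presumably needed here, or in the symmetrization $[x,y]+[y,x]\in Leib(\Lieg)$, to make products of ideals behave as in the Lie case. I expect this to be the main obstacle, because the right-multiplication case $[[I,J],x]$ is not a derivation identity. My first attempt would be to rewrite it as $[I,[J,x]]-[J,[I,x]]$, which lies in $[I,J]+[J,I]$. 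If one-sided products fail to be ideals, I would instead work with the two-sided ideal generated by $[I,J]$.

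\textbf{(ii) $\Rightarrow$ (iii).} Fix $m$ as in (ii) and let $\mathcal{C}$ be a nonempty collection of ideals. I would argue by contradiction using dependent choice. Suppose that for every $I\in\mathcal{C}$ there is some $J\in\mathcal{C}$ with $J\subseteq I$ such that $[\Lieg^{(m)},I]\not\subseteq J$ or $[I,\Lieg^{(m)}]\not\subseteq J$. Build a chain $J_1\supseteq J_2\supseteq\cdots$ in $\mathcal{C}$ with this failure at each step. By (ii), both product chains stabilize from some index $k$ on. Then $[\Lieg^{(m)},J_k]=[\Lieg^{(m)},J_{k+1}]\subseteq J_{k+1}$, and likewise on the right. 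This contradicts the choice of $J_{k+1}$.

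\textbf{(iii) $\Rightarrow$ (i).} Apply (iii) to $\mathcal{C}=\{I_t:t\ge 1\}$ for a given chain. This gives some $I_s$ and some $m$ with $[\Lieg^{(m)},I_s]\subseteq I_n$ and $[I_s,\Lieg^{(m)}]\subseteq I_n$ for all $n\ge s$. The inclusion is trivial for $n<s$, since $[\Lieg^{(m)},I_s]\subseteq I_s\subseteq I_n$. Now set $M=\max(m,s)$. Then $\Lieg^{(M)}\subseteq\Lieg^{(m)}$ and $I_M\subseteq I_s$, so $[\Lieg^{(M)},I_M]\subseteq I_n$ and $[I_M,\Lieg^{(M)}]\subseteq I_n$ for every $n$. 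This is exactly the definition of left and right quasi-Artinian with $m_l=m_r=M$.
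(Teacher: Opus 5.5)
The steps (ii)$\Rightarrow$(iii) and (iii)$\Rightarrow$(i) are correct. Your (iii)$\Rightarrow$(i) is the paper's argument. Your (ii)$\Rightarrow$(iii) is more direct than the paper's, which tries to build a chain of strictly decreasing products; you contradict the non-inclusion immediately via $[\Lieg^{(m)},J_k]=[\Lieg^{(m)},J_{k+1}]\subseteq J_{k+1}$.

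The real difference is in (i)$\Rightarrow$(ii). The paper does not go through the characterization:
\begin{itemize}
\item It applies the hypothesis to the derived series to get a perfect term $P=\Lieg^{(s+1)}=[P,P]$.
\item It then applies it to the given chain to get $[P,I_t]\subseteq I_k$ for all $k$.
\item By $[[x,y],z]=[x,[y,z]]-[y,[x,z]]$ this yields $[P,I_t]=[[P,P],I_t]\subseteq[P,[P,I_t]]\subseteq[P,I_k]$, so the chain is constant from $t$ on. The right chain is treated ``similarly''.
\end{itemize}
That argument only feeds genuine ideals into the quasi-Artinian condition. Your route is shorter and treats both chains uniformly. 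But it uses the \emph{proof} of the Proposition: you need its particular $I=\Lieg^{(\omega)}=\Lieg^{(n)}$, not just its statement. That proof applies the quasi-Artinian condition to the chains $[\mathcal{H}_i,I]$, so the ideal property of one-sided products is needed there, not in your own deduction.

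Now the technical point you want to settle first. Your deduction does not need it: the minimal condition on the two families is a statement about subspaces, and $[I,I_k]$, $[I_k,I]$ lie in those families whether or not they are ideals. It is also false as stated. In a left Leibniz algebra $[I,J]$ is always closed under left multiplication. For right multiplication, your own rewriting gives only $[[I,J],x]\subseteq[I,J]+[J,I]$, and the second summand cannot be dropped; only $[I,J]+[J,I]$ is guaranteed to be two-sided.

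For example, take $\mathfrak{s}=\mathfrak{sl}_2$ with Killing form $\kappa$, and let $\Lieg=\mathfrak{s}\oplus\bar{\mathfrak{s}}\oplus\mathbb{K}t$. Give it the bracket of $\mathfrak{s}\ltimes\bar{\mathfrak{s}}$, with $\bar{\mathfrak{s}}$ an abelian copy of the adjoint module, modified by $[\bar a,s]=\overline{[a,s]}+\kappa(a,s)t$, with $t$ annihilating everything. Invariance of $\kappa$ makes this a left Leibniz algebra, and it is perfect. $J=\bar{\mathfrak{s}}\oplus\mathbb{K}t$ is a two-sided ideal and $[\Lieg,J]=\bar{\mathfrak{s}}$. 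But $[\bar h,h]=8t\notin\bar{\mathfrak{s}}$, so $[\Lieg,J]$ is not a right ideal.

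So a direct verification would fail. Replacing $[I,J]$ by the ideal it generates would not give (ii) either, since (ii) is about the products themselves. Finally, the characteristic hypothesis is not where you expect it: $[x,y]+[y,x]=[x+y,x+y]-[x,x]-[y,y]\in Leib(\Lieg)$ in every characteristic.
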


\begin{proof} To prove that
$(i)\Rrightarrow (ii),$
    assume that $\Lieg \in q \min$-$\lhd$. Since $\Lieg \supseteq \Lieg^{(1)} \supseteq \Lieg^{(2)} \supseteq \cdots \supseteq \Lieg^{(t)} \supseteq \cdots$
    is a descending chain of ideals of $\Lieg$, there exists $ s \in \mathbb{N}$ such that $[\Lieg^{(s)},\Lieg^{(s)}] \subseteq \Lieg^{(r)}$
    for all $r \in \mathbb{N}$. So, $\Lieg^{(s+1)}=[\Lieg^{(s)},\Lieg^{(s)}] \subseteq \Lieg^{(r)}$ for all $r \in \mathbb{N}$.
     Hence $\Lieg^{(s+1)}=\Lieg^{(s+k)}$ for all $k\geq 1.$
    In addition, $I_{1} \supseteq I_{2} \supseteq I_{3} \supseteq \cdots$ is a descending chain of ideals of $\Lieg$, so there exists $t \in \mathbb{N}$
    such that $[\Lieg^{(t)},I_{t}] \subseteq I_{k}$ for all $k \in \mathbb{N}$. Set $p= \mbox{max}\{s+1, t\}\ $;
     one has   $[\Lieg^{(p)},I_{t}] = [\Lieg^{(s+1)},I_{t}]\subseteq
     I_{k}$,  for all $k \in \mathbb{N},$  thus $[\Lieg^{(s+1)},I_{t}]\subseteq [\Lieg^{(s+1)},I_{k}],$ for all $k \in
     \mathbb{N}.$ Taking $m_{1}= s+1,$ we obtain $[\Lieg^{(m_{1})},I_{t}]\subseteq [\Lieg^{(m_{1})},I_{k}],$ for all $k \in
     \mathbb{N}.$ That is, $[\Lieg^{(m_{1})},I_{t}]= [\Lieg^{(m_{1})},I_{t+i}]$ for all $i \in
     \mathbb{N}.$ Henceforth the descending chain of ideals
     $\left[\Lieg^{\left(m_{1}\right)},I_{1}\right]\supseteq\left[\Lieg^{\left(m_{1}\right)},I_{2}\right]\supseteq\cdots$
     terminates. Similarly, we obtain $m_{2}$ such that $[I_{1},\Lieg^{(m_{2})}] \supseteq [I_{2},\Lieg^{(m_{2})}] \supseteq \cdots$ terminates.
Finally for $m=max\{m_{1}, m_{2}\},$ the above two sequences of
ideals terminate simultaneously.\\
To prove that $(ii)\Rrightarrow (iii),$
 let $\mathcal{C}$ the collection of all non trivial ideals of $\Lieg.$ Also,  let $0\neq m\in \mathbb{N}$ and  $I_{0}\in \mathcal{C},$ and
assume that $(iii)$ does not hold. Then, there exists $I_{1}\in \mathcal{C}$ such that $I_{0}\supseteq I_{1}$
 with $[I_{0},\Lieg^{(m)}] \supsetneq [I_{1},\Lieg^{(m)}]$ or $[\Lieg^{(m)}, I_{0}] \supsetneq [\Lieg^{(m)}, I_{1}].$
 Similarly for $I_{1},$ there exists $I_{2}\in \mathcal{C}$ such that
 $I_{1}\supseteq I_{2}$ and  $[I_{1},\Lieg^{(m)}] \supsetneq
 [I_{2},\Lieg^{(m)}]$ or $[\Lieg^{(m)}, I_{1}] \supsetneq [\Lieg^{(m)}, I_{2}].$ So,  we obtain progressively a non
 stationary descending chain of ideals in $\mathcal{C},$ $I_{0}\supseteq I_{1}\supseteq
 I_{3}\cdots,$ from which one can extract an infinite descending
 subchain, $I^{\prime}_{0}\supseteq I^{\prime}_{1}\supseteq
 I^{\prime}_{2}\cdots,$ with terms in $\mathcal{C}$ such that one of the following descending chain
$[I^{\prime}_{0},\Lieg^{(m)}] \supsetneq
[I^{\prime}_{1},\Lieg^{(m)}] \supsetneq
 [I^{\prime}_{2},\Lieg^{(m)}]\supsetneq \cdots,$ $[\Lieg^{(m)}, I^{\prime}_{0}] \supsetneq [\Lieg^{(m)}, I^{\prime}_{1}] \supsetneq
 [\Lieg^{(m)}, ]\supsetneq \cdots$ does not terminate. Therefore $(ii)$ does not
 hold as well.\\
To prove that $(iii)\Rrightarrow (i),$
let  $I_{1} \supseteq I_{2} \supseteq I_{3} \supseteq \cdots$ be a
descending chain of ideals of $\Lieg.$ Set $\mathcal{C}= \{I_{s},
s>0\}$ the set whose elements  are the terms of the above sequence.
From $(iii),$ there exists $I_{s_{0}}\in\mathcal{C}$ and $r\in
\mathbb{N}$ such that $[\Lieg^{(r)}, I_{s_{0}}]\subseteq I_{s}$ and
$[I_{s_{0}}, \Lieg^{(r)}]\subseteq I_{s},$ for all $s\geq s_{0}.$
Therefore $[\Lieg^{(r)}, I_{s_{0}}]\subseteq I_{s}$ and $[I_{s_{0}},
\Lieg^{(r)}]\subseteq I_{s},$ for all $s>0.$ Considering $m=
\mbox{max}\{r, s_{0}\}$, one has $[\Lieg^{(m)}, I_{m}]\subseteq
I_{s}$ and $[I_{m}, \Lieg^{(m)}]\subseteq I_{s},$ for all $s>0.$ Hence
 $\Lieg$ is a quasi-Artinian Leibniz algebra.
\end{proof}

\begin{Rem} Using the above theorem, one shows that the subclass $q\min\mbox{-}\lhd$ of quasi-Artinian
Leibniz algebra is not $E$-closed. Indeed, let $\LieH$ be the infinitely
generated $\mathbb{Q}$-vector space defined by:
$$\LieH:=\langle\{x_{\alpha}: 0\neq\alpha\in \mathbb{Q}\}\rangle.$$
Endows $\LieH$ with the abelian Leibniz algebra structure  and
define the following vector space homomorphisms:
$$\LieH\stackrel{a}\longrightarrow \LieH, x_{\alpha}\longmapsto
x_{\alpha+1}; \LieH\stackrel{b}\longrightarrow \LieH,
x_{\alpha}\longmapsto (\alpha-1)x_{\alpha-1};
\LieH\stackrel{c}\longrightarrow \LieH, x_{\alpha}\longmapsto
2\alpha x_{\alpha}.$$ Set $\Lieg= \LieH\oplus \LieK$ where $\LieK$
is the vector space generated by $\{a, b, c\}.$ Endows
$\Lieg$ with the Leibniz bracket by extending the bracket of $\LieH$
as follows: $[x_{\alpha}, v]= v(x_{\alpha}),$ $[v, x_{\alpha}]= 0$
and $[u,v]= u\circ v -v\circ u,$ for $u, v\in\{a, b, c\}.$\\ We
have the following facts: $\LieH\lhd\Lieg,$ $(\LieH, \LieK\cong
\frac{\Lieg}{\LieH} \in
q\min\mbox{-}\lhd)$  and thus $\Lieg \in E(q\min\mbox{-}\lhd).$ \\
Consider the descending sequence $(\LieH_{n})_{n>0}$ of subspaces of
$\LieH$ such that $\LieH_{n}= \langle\{x_{\alpha}: \alpha\in
\mathbb{Q}, \alpha<\frac{1}{n}\}\rangle.$ Obviously
$\LieH_{n}\lhd\Lieg,$ for $n>0$   and for some positive integer $m,$
$\Lieg^{(m)}= \Lieg.$ In addition, the descending sequence,
$\{[\LieH_{n},\Lieg^{(m)}]= \LieH_{n} : n>0\},$ of ideals of $\Lieg$
does not satisfy the minimal condition. It follows from the above
theorem that $\Lieg\notin q\min\mbox{-}\lhd$ and hence $q\min\mbox{-}\lhd$ is not
$E$-closed.
\end{Rem}
\begin{Pro}
The subclass of quasi-Artinian Leibniz algebras, $q\min\mbox{-}\lhd,$ is $Q$-closed.
\end{Pro}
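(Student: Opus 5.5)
To show that $q\min\mbox{-}\lhd$ is $Q$-closed, we take $\Lieg\in q\min\mbox{-}\lhd$ and an arbitrary two-sided ideal $K\lhd\Lieg$, and check that $\Lieg/K$ is again quasi-Artinian. We transport chains of ideals of the quotient back to $\Lieg$ through the correspondence theorem, apply the defining condition in $\Lieg$, and then project the resulting inclusions down with the canonical epimorphism $\pi:\Lieg\to\Lieg/K$.

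Start with a descending chain $\bar I_1\supseteq\bar I_2\supseteq\cdots$ of ideals of $\Lieg/K$. By the correspondence theorem, each $\bar I_t$ equals $I_t/K$ for a unique ideal $I_t$ of $\Lieg$ with $K\subseteq I_t$. Since the correspondence preserves inclusion, $I_1\supseteq I_2\supseteq\cdots$ is a descending chain of ideals of $\Lieg$. As $\Lieg$ is left and right quasi-Artinian, there are integers $m_l,m_r$ such that for every $n\in\mathbb{N}$
$$[\Lieg^{(m_l)},I_{m_l}]\subseteq I_n \quad\mbox{and}\quad [I_{m_r},\Lieg^{(m_r)}]\subseteq I_n .$$

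Next we record how $\pi$ interacts with brackets and the derived series. Since $\pi$ is a surjective homomorphism of Leibniz algebras, $\pi([A,B])=[\pi(A),\pi(B)]$ for any subspaces $A,B$. An induction on $k$ then gives $\pi(\Lieg^{(k)})=(\Lieg/K)^{(k)}$: the case $k=0$ is surjectivity, and for the inductive step
$$\pi(\Lieg^{(k+1)})=[\pi(\Lieg^{(k)}),\pi(\Lieg^{(k)})]=[(\Lieg/K)^{(k)},(\Lieg/K)^{(k)}]=(\Lieg/K)^{(k+1)}.$$
We also have $\pi(I_t)=\bar I_t$ for every $t$.

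Applying $\pi$ to the two inclusions above now yields $[(\Lieg/K)^{(m_l)},\bar I_{m_l}]\subseteq\bar I_n$ and $[\bar I_{m_r},(\Lieg/K)^{(m_r)}]\subseteq\bar I_n$ for all $n$. Hence $\Lieg/K$ is both left and right quasi-Artinian, so $\Lieg/K\in q\min\mbox{-}\lhd$, which is the $Q$-closedness. Since $K$ was arbitrary, this shows $q\min\mbox{-}\lhd\subseteq(q\min\mbox{-}\lhd)^{Q}$. The reverse inclusion comes from taking $I=0$ in the definition of $\mathfrak{X}^Q$.

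No step here is a real obstacle. The one point that needs an explicit justification is the compatibility $\pi(\Lieg^{(k)})=(\Lieg/K)^{(k)}$, which uses surjectivity of $\pi$. Without it the projected inclusion would only involve $\pi(\Lieg^{(m)})$ rather than the derived terms of the quotient.
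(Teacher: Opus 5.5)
Your proof is correct and follows essentially the same route as the paper: you pull the chain back to $\Lieg$ via preimages under the projection, apply the quasi-Artinian condition there, and push the resulting inclusions forward using $\pi(\Lieg^{(k)})=(\Lieg/K)^{(k)}$. Your explicit check of this derived-series compatibility, and your use of separate left and right indices $m_l,m_r$, make precise what the paper leaves implicit.
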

\begin{proof}
 Let $J$ be an ideal of a quasi-Artinian Leibniz algebra $\Lieg$ and consider $ \Lieg \stackrel{\phi}\rightarrow
\frac{\Lieg}{J}$ the natural projection. Let $\bar{J_{1}} \supseteq
\bar{J_{2}} \supseteq \cdots \supseteq \bar{J_{r}} \supseteq \cdots$
be a descending chain of ideals of $\bar{\Lieg}=\frac{\Lieg}{J}.$ Then
$\phi^{-1}(\bar{J_{1}}) \supseteq \phi^{-1}(\bar{J_{2}}) \supseteq
\cdots \supseteq \phi^{-1}(\bar{J_{r}}) \supseteq \cdots$ is a
descending chain of ideals of $\Lieg$.
Since $\Lieg$ is quasi-Artinian, there exists $m \in \mathbb{N}$
such that $[\Lieg^{(m)},\phi^{-1}(\bar{J_{m}})] \subseteq
\phi^{-1}(\bar{J_{n}})$ and $[\phi^{-1}(\bar{J_{m}}),\Lieg^{(m)}]
\subseteq \phi^{-1}(\bar{J_{n}})$ for all  $n \in \mathbb{N}$.
Applying the map $\phi$ to the previous inclusions leads us to the
following: $[(\phi(\Lieg))^{(m)},\bar{J_{m}}] \subseteq \bar{J_{n}}$
and $[\bar{J_{m}},(\phi(\Lieg))^{(m)}] \subseteq \bar{J_{n}}$  for
all $n \in \mathbb{N}$ and thus $[\bar{\Lieg}^{(m)},\bar{J_{m}}]
\subseteq \bar{J_{n}}$
 and $[\bar{J_{m}},\bar{\Lieg}^{(m)}] \subseteq \bar{J_{n}}$  for all  $n \in \mathbb{N}.$ Hence  $q\min\mbox{-}\lhd$ is $Q$-closed.
\end{proof}
\begin{Th}
  Let $I$ be an ideal of  a Leibniz algebra $\Lieg$. $\Lieg$ is
quasi-Artinian if one of the following assertions holds:
\begin{enumerate}
    \item $\frac{\Lieg}{I}$ is quasi-Artinian and $I$ is Artinian,
    \item $I$ is quasi-Artinian and $\frac{\Lieg}{I}$ is solvable,

\end{enumerate}

\end{Th}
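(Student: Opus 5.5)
For both parts, fix a descending chain $I_{1}\supseteq I_{2}\supseteq\cdots$ of ideals of $\Lieg$ and compare it with the auxiliary chain $I_{1}\cap I\supseteq I_{2}\cap I\supseteq\cdots$. Each $I_{n}\cap I$ is an ideal of $\Lieg$ contained in $I$, hence an ideal of $I$, so the hypothesis on $I$ applies to this chain. The hypothesis on the quotient handles the part of the chain lying outside $I$.

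\textbf{Part 1.} Since $I$ is Artinian, the chain $(I_{n}\cap I)$ stabilizes: there is $k$ with $I_{n}\cap I=I_{k}\cap I$ for all $n\geq k$. Next apply the quasi-Artinian property of $\Lieg/I$ to the chain $((I_{n}+I)/I)_{n}$. Pulling back through the projection gives an $m$ such that, for all $n$,
$$[\Lieg^{(m)},I_{m}]\subseteq I_{n}+I \quad\text{and}\quad [I_{m},\Lieg^{(m)}]\subseteq I_{n}+I.$$
Put $M=\max\{m,k\}$. For $n\leq M$ the required inclusions are trivial, because $[\Lieg^{(M)},I_{M}]\subseteq I_{M}\subseteq I_{n}$. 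For $n\geq M$ we have $I_{n}\subseteq I_{M}$, so the modular law gives
$$[\Lieg^{(M)},I_{M}]\subseteq I_{M}\cap(I_{n}+I)=I_{n}+(I_{M}\cap I)=I_{n}+(I_{n}\cap I)=I_{n}.$$
The right-hand bracket $[I_{M},\Lieg^{(M)}]$ is treated in exactly the same way.

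\textbf{Part 2.} Since $\Lieg/I$ is solvable, there is $d$ with $\Lieg^{(d)}\subseteq I$, and by induction $\Lieg^{(d+s)}\subseteq I^{(s)}$ for every $s$. Apply the left and right quasi-Artinian properties of $I$ to the chain $(I_{n}\cap I)$. This yields one $s$ (take the larger of the two indices) such that, for all $n$,
$$[I^{(s)},I_{s}\cap I]\subseteq I_{n} \quad\text{and}\quad [I_{s}\cap I,I^{(s)}]\subseteq I_{n}.$$

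The main obstacle is that the second argument in $[\Lieg^{(M)},I_{M}]$ lies in $I_{M}$, not in $I_{M}\cap I$, so the property of $I$ cannot be applied directly. The plan is to go one step further down the derived series and use the Leibniz identity.

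Take $M=d+s+1$ and elements $a,b\in\Lieg^{(d+s)}\subseteq I^{(s)}\subseteq I$, $x\in I_{M}$. Since $I$ and $I_{M}$ are both ideals of $\Lieg$, the products $[b,x]$, $[a,x]$ and $[x,a]$ all lie in $I\cap I_{M}\subseteq I\cap I_{s}$. The left Leibniz identity gives
$$[[a,b],x]=[a,[b,x]]-[b,[a,x]].$$
Both terms on the right lie in $[I^{(s)},I_{s}\cap I]\subseteq I_{n}$, so $[\Lieg^{(M)},I_{M}]\subseteq I_{n}$ for all $n$. The identity also gives
$$[x,[a,b]]=[[x,a],b]+[a,[x,b]].$$
Here the first term lies in $[I_{s}\cap I,I^{(s)}]\subseteq I_{n}$ by the right property, and the second lies in $[I^{(s)},I_{s}\cap I]\subseteq I_{n}$ by the left property. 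Hence $[I_{M},\Lieg^{(M)}]\subseteq I_{n}$ for all $n$, and $\Lieg$ is quasi-Artinian.
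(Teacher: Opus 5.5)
Your proof is correct and follows the paper's route. In part 1 you stabilize $I_n\cap I$ using the Artinian property of $I$ and combine this with the inclusion pulled back from the quotient; your explicit modular-law step $I_M\cap(I_n+I)=I_n+(I_M\cap I)$ is what the paper's less cleanly stated identity is aiming at. In part 2 you descend one extra step in the derived series and apply the Leibniz identity, exactly as the paper does with $[\Lieg^{(p+r+1)},J_{p+r}]\subseteq[\Lieg^{(p+r)},[\Lieg^{(p+r)},J_{p+r}]]$; your write-up is more careful, using $\Lieg^{(d)}\subseteq I$ rather than $\Lieg^{(p)}=I$ and showing that the right-hand bracket needs both the left and right quasi-Artinian properties of $I$.
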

\begin{proof}
To prove the first part, let $I$ be an Artinian  ideal of $\Lieg$ such that $\frac{\Lieg}{I}$ is quasi-Artinian and consider
    $J_{0} \supseteq J_{1} \supseteq J_{2} \supseteq\cdots$, a descending chain of ideals of $\Lieg.$ Then $J_{0} \cap I
\supseteq J_{1} \cap I \supseteq J_{2} \cap I \supseteq \cdots
\supseteq J_{r} \cap I \supseteq \cdots$ is a descending chain of
ideals of $I$ and $\frac{J_{0}+I}{I} \supseteq \frac{J_{1} +I}{I}
\supseteq \frac{J_{2}+I}{I} \supseteq \cdots \supseteq
\frac{J_{r}+I}{I} \supseteq \cdots$ is a descending chain of ideals
of $\frac{\Lieg}{I}$. On one hand, since $I$ is Artinian, the
descending chain of ideals $\{J_{s}\cap I, s\in \mathbb{N}\}$
terminates. Set $J_{s_{0}} \cap I$ be its minimal element. On the
other hand, since $\Lieg/I$ is quasi-Artinian, there exists $m \in
\mathbb{N}$ such that $([\Lieg^{(m)},J_{m}] + I)/I \subseteq (J_{n}
+ I)/I$ and $([J_{m},\Lieg^{(m)}] + I)/I \subseteq (J_{n} + I)/I$
for all $n \in \mathbb{N}$. So $[\Lieg^{(m)},J_{m}] \subseteq J_{n}
+ I$ and $[J_{m},\Lieg^{(m)}] \subseteq J_{n} + I$ for all $n \in
\mathbb{N}.$ By taking $q= \mbox{max}\{m, s_{0}\},$ we have
$[\Lieg^{(q)},J_{q}] \subseteq J_{q} \subseteq J_{s_{0}-i}$ and
$[J_{q}, \Lieg^{(q)}] \subseteq J_{q} \subseteq J_{s_{0}-i},$ for
$0\leq i\leq s_{0}.$ Moreover since $J_{s_{0}}\cap I =
J_{s_{0}+i}\cap I$ and $J_{s_{0}+i}\cap J_{q} + J_{s_{0}+i}\cap I=
J_{\mbox{max}\{s_{0}+i , q\}} + J_{s_{0}}\cap I=
J_{\mbox{max}\{s_{0}+i , q\}}, \quad \mbox{for all}\quad i> 0,$ we
obtain $[\Lieg^{(q)},J_{q}] \subseteq J_{s_{0}+i}$ and $[J_{q},
\Lieg^{(q)}] \subseteq J_{s_{0}+i},$ for all $i> 0.$ Therefore
$[\Lieg^{(q)},J_{q}] \subseteq J_{n}$ and $[J_{q}, \Lieg^{(q)}]
\subseteq J_{n},$ for all $i> 0.$ Hence $\Lieg$ is quasi-Artinian.

To prove the second part,
assume that $I$ is quasi-Artinian and $\frac{\Lieg}{I}$ is solvable, and
consider $J_{0} \supseteq J_{1} \supseteq J_{2} \supseteq \cdots,$ a
descending chain of ideals of $\Lieg.$  So $J_{0}\cap I \supseteq
J_{1}\cap I \supseteq J_{2}\cap I \supseteq \cdots$ a descending
chain of ideals of $I$. Since $I$ is quasi-Artinian, there exists
$r\in \mathbb{N}$ such that $[I^{(r)}, J_{r}\cap I]\subseteq
J_{s}\cap I$ and $[J_{r}\cap I, I^{(r)}]\subseteq J_{s}\cap I,$ for
all $s\in \mathbb{N}$. In addition, since $\frac{\Lieg}{I}$ is solvable,
there exists  $p>0$ such that $\Lieg^{(p)}= {I}.$ Observe that
$[[\Lieg^{(p+r)}, \Lieg^{(p+r)}], J_{p+r}]= [\Lieg^{(p+r+1)},
J_{p+r}]$ and $[\Lieg^{(p+r)}, J_{p+r}]\subseteq I\cap J_{p+r};$ so
one has $[\Lieg^{(p+r+1)}, J_{p+r+1}]\subseteq [\Lieg^{(p+r+1)},
J_{p+r}]\subseteq [\Lieg^{(p+r)}, [\Lieg^{(p+r)},
J_{p+r}]]\subseteq[I^{(r)}, I\cap J_{p+r}]\subseteq [I^{(r)}, I\cap
J_{r}]\subseteq I\cap J_{s} \subseteq J_{s},$ for all $\geq 0.$
Similarly, one can show that $[ J_{p+r+1}, \Lieg^{(p+r+1)}]\subseteq
J_{s},$ for all $s\geq 0.$ Hence $\Lieg$ is quasi-Artinian.
\end{proof}

From the result above,  we deduce the following observations:
\begin{Co}\label{co3.7}
    Let $\Lieg$ be a Leibniz algebra and $I$ an ideal  of $\Lieg.$
    \begin{enumerate}
        \item If $I$ is simple and $\Lieg/I$ is solvable, then $\Lieg$ is quasi-Artinian.
        \item If $I$ is simple and $\Lieg/I$ is abelian, then $\Lieg$ is quasi-Artinian.
    \end{enumerate}

\end{Co}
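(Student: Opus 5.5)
The plan is to deduce both items from the second part of the preceding Theorem, which says that if $I$ is quasi-Artinian and $\Lieg/I$ is solvable, then $\Lieg$ is quasi-Artinian. Since item 2 is a special case of item 1 (abelian algebras are solvable, of derived length $\leq 1$), it suffices to prove item 1. For that we only need to check that a simple ideal $I$, viewed as a Leibniz algebra in its own right, is quasi-Artinian.

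The key step is to show that a simple Leibniz algebra $I$ is Artinian. By the Remark following the definition of quasi-Artinian, every Artinian Leibniz algebra is quasi-Artinian. Simplicity means the only ideals of $I$ are $0$, $I$ and (under the usual convention for Leibniz algebras) $Leib(I)$. So $I$ has only finitely many ideals. Any descending chain $I_1\supseteq I_2\supseteq\cdots$ of ideals of $I$ therefore takes only finitely many values, and so it is eventually constant. Hence $I$ is Artinian, and therefore quasi-Artinian. Combined with the hypothesis that $\Lieg/I$ is solvable, part 2 of the preceding Theorem gives that $\Lieg$ is quasi-Artinian. Applying the same argument with $\Lieg/I$ abelian gives item 2.

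The main obstacle is a subtle point about which ideals are involved. In the preceding Theorem, the chains used are of the form $J_s\cap I$, where the $J_s$ are ideals of $\Lieg$. These intersections are ideals of $\Lieg$ contained in $I$, and hence ideals of $I$. So the finiteness of the ideal lattice of $I$ applies to them directly, and there is no issue with ideals of ideals failing to be ideals. I would state this explicitly.

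A possible alternative route goes through item 1 of the preceding Theorem. If $I$ is simple, then $I$ is Artinian. If in addition $\Lieg/I$ is solvable, then $\Lieg/I$ is quasi-Artinian by Proposition \ref{Prop 3.5}. Item 1 then yields the conclusion as well. I would mention this route as a cross-check.
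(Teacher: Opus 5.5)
Your proposal is correct and follows the paper's own route. The paper states this corollary as an immediate consequence of the preceding theorem: a simple ideal has only finitely many ideals, so it is Artinian and hence quasi-Artinian. Part 2 of the theorem then applies, or alternatively part 1 together with Proposition \ref{Prop 3.5}, and item 2 follows from item 1 because abelian algebras are solvable.
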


Below are two examples of  Leibniz algebras; the first one is an
Artinian Leibniz algebra which is also quasi-Artinian while the
second one is  a quasi-Artinian Leibniz algebra that is not Artinian.

\begin{Ex}

   Consider $\Lieg$ the $\mathbb{K}$-vector space spanned by
    $\{e_{1}, e_{2}, e_{3}, e_{4}, e_{5}, e_{6}\}.$ Define the binary operation
    $\Lieg\otimes \Lieg\stackrel{[-, -]}\longrightarrow \Lieg$ as
    follows: $[e_{2}, e_{2}]= e_{1}, [e_{3}, e_{3}]= e_{4},  [e_{4}, e_{3}]= e_{5}, [e_{5}, e_{3}]= e_{6}.$
 One can verify that the above bracket satisfies the
    Leibniz identity. Set $I$ and $J$ the subspaces of
    $\Lieg$ respectively spanned by $\{e_{1}, e_{2}\}$ and $\{
    e_{3}, e_{4}, e_{5}, e_{6}\}$. Then $I$ and $J$ are two-sided ideals of
    $\Lieg.$ Moreover, $I$ is a simple ideal of $\Lieg$ and
    $\frac{\Lieg}{I} \cong J$ is a nilpotent Leibniz algebra. So
    $\frac{\Lieg}{I}$ is a solvable Leibniz algebra. It follows by Corollary \ref{co3.7}
    that $\Lieg$ is a quasi-Artinian Leibniz algebra, which is also  Artinian since it is of finite dimension.

\end{Ex}

\begin{Ex} Consider the infinite dimensional vector space $\Lieg$ with  basis elements $\{e_{1},e_{2}, \cdots\}$ and  define a binary operation on
    $\Lieg$ as follows: $[e_{1},e_{2}]=e_1$ and
    $[e_{i},e_{3}]= e_{i+1},$ $i\geq 4$. It is easy to verify that the above bracket satisfies the
    Leibniz identity, and hence $\Lieg$ is a Leibniz algebra. Note that $\Lieg$ is not an Artinian Leibniz algebra. Now, consider $I$ and $J$ the subspaces of $\Lieg$ spanned
    by $\{e_{1},e_{2}\}$ and
    $\{e_{3},e_{4},\cdots\}$ respectively. Clearly, $I$ and
    $J$ are two-sided ideals of $\Lieg$. In addition, $I$ is a simple
    ideal while $J$ is a solvable ideal of $\Lieg$. Since
    $\frac{\Lieg}{I}\cong J,$ it follows again by Corollary \ref{co3.7} that $\Lieg$
    is a  quasi-Artinian Leibniz algebra.

\end{Ex}

\begin{Pro}
A finite direct sum of quasi-Artinian Leibniz algebras is
    a quasi-Artinian Leibniz algebra.
\end{Pro}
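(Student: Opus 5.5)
The plan is to treat two summands and then induct on the number of summands. Let $\Lieg=\A\oplus\mathcal{B}$ with $\A,\mathcal{B}$ quasi-Artinian and $[\A,\mathcal{B}]=[\mathcal{B},\A]=0$. Two preliminary facts are needed.

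First, $\Lieg^{(k)}=\A^{(k)}\oplus\mathcal{B}^{(k)}$ for every $k$. This follows by induction on $k$, because mixed brackets vanish.

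Second, let $\pi_{\A}\colon\Lieg\to\A$ be the projection. It is a surjective homomorphism, so for any ideal $J$ of $\Lieg$ the image $\pi_{\A}(J)$ is an ideal of $\A$. For $a\in\A$ and $j\in J$ we have $[a,j]=[a,\pi_{\A}(j)]$, hence
$$[\A^{(k)},J]=[\A^{(k)},\pi_{\A}(J)]\subseteq J\cap \A,$$
and symmetrically $[J,\A^{(k)}]=[\pi_{\A}(J),\A^{(k)}]\subseteq J$.

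The main obstacle is that ideals of $\Lieg$ need not split as $J=(J\cap\A)\oplus(J\cap\mathcal{B})$. So we cannot simply apply the hypothesis on $\A$ to the chain $(J_n)$ itself. The quasi-Artinian condition on $\A$ only yields inclusions into $\pi_{\A}(J_n)$, not into $J_n$.

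To bridge this gap, take a descending chain $J_1\supseteq J_2\supseteq\cdots$ of ideals of $\Lieg$. Apply the hypothesis on $\A$ to the chain $\pi_{\A}(J_1)\supseteq\pi_{\A}(J_2)\supseteq\cdots$. This gives $m_1$ with $[\A^{(m_1)},\pi_{\A}(J_{m_1})]\subseteq\pi_{\A}(J_n)$ and $[\pi_{\A}(J_{m_1}),\A^{(m_1)}]\subseteq\pi_{\A}(J_n)$ for all $n$. Now bracket once more with $\A^{(m_1)}$ to land inside $J_n$ itself.

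On the left, the Leibniz identity $[[x,y],z]=[x,[y,z]]-[y,[x,z]]$ with $x,y\in\A^{(m_1)}$ gives
$$[\A^{(m_1+1)},X]\subseteq[\A^{(m_1)},[\A^{(m_1)},X]].$$
Take $X=\pi_{\A}(J_{m_1})$ and set $M_{\A}=m_1+1$. Then
$$[\A^{(M_{\A})},\pi_{\A}(J_{M_{\A}})]\subseteq[\A^{(m_1)},\pi_{\A}(J_n)]=[\A^{(m_1)},J_n]\subseteq J_n .$$

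On the right, take $x\in X$ and $y,z\in\A^{(m_1)}$, so that $[x,[y,z]]=[[x,y],z]+[y,[x,z]]$. Both $[x,y]$ and $[x,z]$ lie in $\pi_{\A}(J_n)$. Hence both terms lie in $[\pi_{\A}(J_n),\A^{(m_1)}]+[\A^{(m_1)},\pi_{\A}(J_n)]=[J_n,\A^{(m_1)}]+[\A^{(m_1)},J_n]\subseteq J_n$. Using the first display of the plan, $[J_{M_{\A}},\A^{(M_{\A})}]=[\pi_{\A}(J_{M_{\A}}),\A^{(M_{\A})}]\subseteq J_n$.

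Do the same for $\mathcal{B}$ to get $M_{\mathcal{B}}$, and put $M=\max\{M_{\A},M_{\mathcal{B}}\}$, using monotonicity of the derived series and of the chain. Then
$$[\Lieg^{(M)},J_M]=[\A^{(M)},J_M]+[\mathcal{B}^{(M)},J_M]\subseteq J_n ,$$
and likewise on the right, for all $n$. So $\Lieg$ is quasi-Artinian.

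Finally, the case of finitely many summands follows by induction, writing $\Lieg_1\oplus\cdots\oplus\Lieg_k=(\Lieg_1\oplus\cdots\oplus\Lieg_{k-1})\oplus\Lieg_k$.
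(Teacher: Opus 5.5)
Your proof is correct, but it takes a different and more careful route than the paper, and in doing so it closes a gap in the paper's proof.

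The paper writes each term of the chain as $I_n=I_n^1\oplus I_n^2$, with $I_n^i$ an ideal of $\Lieg_i$. It then applies quasi-Artinianity of $\Lieg_1$ and $\Lieg_2$ to the two component chains and takes $m=\max\{m_1,m_2\}$. This is short, but it rests on exactly the splitting you flag as unavailable. For instance, if $z_1,z_2$ are nonzero central elements of $\Lieg_1,\Lieg_2$, then $\mathrm{span}\{z_1+z_2\}$ is an ideal of $\Lieg_1\oplus\Lieg_2$ that meets both summands trivially, so it cannot be of that form.

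You instead project the chain onto each summand and use $[\A^{(k)},J]=[\A^{(k)},\pi_{\A}(J)]$. You then spend one extra step of the derived series, via the left Leibniz identity in both the left and right computations, to push the inclusions from $\pi_{\A}(J_n)$ back into $J_n$. Since the quasi-Artinian condition survives enlarging the index, this costs nothing. So the paper's argument only covers chains of split ideals, while yours handles arbitrary ideals at the price of shifting $m$ to $m+1$.

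One small bookkeeping point: the definition gives possibly different indices $m_l,m_r$ for the left and right conditions on $\A$. Your single $m_1$ should be taken as their maximum, which is legitimate by the same monotonicity of the derived series and of the chain that you invoke at the end.
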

\begin{proof}
    It suffices to prove that for any two quasi-Artinian Leibniz algebras $\Lieg_{1}$ and $\Lieg_{2}$,  $\Lieg=\Lieg_{1}\oplus\Lieg_{2}$
  is a  quasi-Artinian Leibniz algebra.
    Let $I_{1} \supseteq I_{2} \supseteq \cdots $
be a descending chain of ideals of $\Lieg.$ Let $I_{1}^{1}
\supseteq I_{2}^{1} \supseteq \cdots $ be a descending chain of
ideals of $\Lieg_{1}$ and $I_{1}^{2} \supseteq I_{2}^{2} \supseteq
\cdots$ be a descending chain of ideals of $\Lieg_{2}$ such that
$I_{n}= I^{1}_{n} \oplus  I^{2}_{n},$ for all $n\geq 1.$
Since $\Lieg_{1}$ is quasi-Artinian, then there exists $m_{1} \in
\mathbb{N}$ such that $[\Lieg_{1}^{(m_{1})},I_{m_{1}}^{1}] \subseteq
I_{n}^{1}$ and $[I_{m_{1}}^{1},\Lieg_{1}^{(m_{1})}] \subseteq
I_{n}^{1}$, for all $n\geq 1.$ Similarly, since $\Lieg_{2}$ is
quasi-Artinian, then there exists $m_{2} \in \mathbb{N}$ such that
$[\Lieg_{2}^{(m_{2})},I_{m_{2}}^{2}] \subseteq I_{n}^{2}$ and
$[I_{m_{2}}^{2},\Lieg_{2}^{(m_{2})}] \subseteq I_{n}^{2},$ for all
$n\geq 1.$ By setting $m=\max\{m_{1},m_{2}\}$, we have:
\begin{equation*}
    \begin{aligned}
        \left[\Lieg^{(m)}, I_{m}\right]  &
        = \left[(\Lieg_{1} \oplus \Lieg_{2})^{(m)}, I_{m}\right]\\
        & = \left[\Lieg_{1}^{(m)} \oplus \Lieg_{2}^{(m)},I_{m}\right]\\
            & = \left[\Lieg_{1}^{(m)},I_{m}^{1}\right] \oplus \left[\Lieg_{2}^{(m)},I_{m}^{2}\right]\\
            & \subseteq [\Lieg_{1}^{(m_{1})}, I_{m}^{1}] \oplus \left[\Lieg_{2}^{(m_{2})},
            I_{m}^{2}\right]\\
            & \subseteq I^{1}_{n} \oplus  I^{2}_{n}= I_{n}, for\quad
            all\quad n\geq 1.
    \end{aligned}
\end{equation*}
 Similarly, we have $[I_{m},\Lieg^{(m)}] \subseteq I_{n}$ for
all $n \in \mathbb{N}.$ Hence $\Lieg$ is a quasi-Artinian Leibniz algebra.
\end{proof}

\section{Connection between $q \min\mbox{-} \lhd$ and $\min\mbox{-} \lhd \mathfrak{U}$ }

In this section, we  present classes $\mathfrak{X}$ of Leibniz algebras
 such that $q\,\min\mbox{-}\vartriangleleft\cap\mathfrak{X}=\min\mbox{-}\vartriangleleft$.
\begin{Pro}
Over an arbitrary field of characteristic different to two,
$$\min\mbox{-}\vartriangleleft\subsetneq\left(\min\mbox{-}\vartriangleleft{E}\mathfrak{U}\right)^{Q}.$$
\end{Pro}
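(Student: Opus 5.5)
We must prove two things: the inclusion $\min\mbox{-}\vartriangleleft\subseteq\left(\min\mbox{-}\vartriangleleft E\mathfrak{U}\right)^{Q}$, and that it is strict.

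\emph{Inclusion.} Let $\Lieg\in\min\mbox{-}\vartriangleleft$ and let $J\vartriangleleft\Lieg$. We must show that $\Lieg/J$ satisfies the minimal condition on solvable ideals.
\begin{itemize}
\item By the correspondence theorem, every ideal of $\Lieg/J$ has the form $K/J$ with $J\subseteq K\vartriangleleft\Lieg$, and inclusions are preserved.
\item A descending chain $K_1/J\supseteq K_2/J\supseteq\cdots$ of solvable ideals of $\Lieg/J$ therefore lifts to a descending chain $K_1\supseteq K_2\supseteq\cdots$ of ideals of $\Lieg$.
\item Since $\Lieg$ is Artinian, this chain stabilizes, so the original chain stabilizes as well.
\end{itemize}
Solvability is not used here, so in fact $\min\mbox{-}\vartriangleleft$ is $Q$-closed and $\min\mbox{-}\vartriangleleft\subseteq\min\mbox{-}\vartriangleleft E\mathfrak{U}$. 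The only care needed is that the correspondence theorem holds for two-sided ideals of Leibniz algebras. It does, because $[K,\Lieg]+J$ and $[\Lieg,K]+J$ pull back correctly under the canonical projection. The hypothesis on the characteristic plays no role in this part.

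\emph{Strictness.} We need a Leibniz algebra that is not Artinian, but all of whose quotients satisfy the minimal condition on solvable ideals. The cleanest way is to make the solvable ideals of every quotient trivial. Take an infinite-dimensional simple Lie algebra $\Lieg$ over a field of characteristic $\neq 2$, for instance the Witt algebra $W=\langle e_i: i\in\mathbb{Z}\rangle$ with $[e_i,e_j]=(j-i)e_{i+j}$, in characteristic $0$, or $\mathfrak{sl}_\infty$.
\begin{itemize}
\item Regarded as a Leibniz algebra, $\Lieg$ has only the ideals $0$ and $\Lieg$. Hence its only quotients are $\Lieg$ and $0$.
\item In each quotient the only solvable ideal is $0$, because $\Lieg$ is perfect and so not solvable. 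So every quotient trivially satisfies the minimal condition on solvable ideals.
\end{itemize}
But a simple algebra is itself Artinian, so this alone does not separate the classes. The example has to be refined.

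Refined example: take $\Lieg=\bigoplus_{n\in\mathbb{N}}S_n$, an infinite direct sum of copies of a simple Lie algebra $S$ (such as $\mathfrak{sl}_2$, which is where characteristic $\neq2$ is used).
\begin{itemize}
\item \emph{Not Artinian.} The chain $\bigoplus_{n\ge k}S_n$, $k=1,2,\dots$, is a strictly descending chain of ideals, so $\Lieg\notin\min\mbox{-}\vartriangleleft$.
\item \emph{Quotients.} Every ideal of $\Lieg$ is a sum of some of the $S_n$. Indeed, projecting a nonzero ideal onto a coordinate where it is nonzero and bracketing with $S_n$ (which is perfect and centerless) shows the ideal contains that $S_n$. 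Hence every quotient is again a direct sum of copies of $S$.
\item \emph{Solvable ideals of quotients.} Each such quotient has no nonzero solvable ideals, since any nonzero ideal contains a perfect summand $S_n$. So the minimal condition on solvable ideals holds trivially, and $\Lieg\in\left(\min\mbox{-}\vartriangleleft E\mathfrak{U}\right)^{Q}$.
\end{itemize}

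The main obstacle is the step describing all ideals of $\bigoplus S_n$. For an ideal $K$ and an element $x\in K$ with nonzero component $x_n\in S_n$, we have $[S_n,x]=[S_n,x_n]\subseteq K\cap S_n$. This intersection is a nonzero ideal of $S_n$, because $S_n$ is centerless, so it equals $S_n$. Since $K$ is then the span of the $S_n$ it contains, $K$ is a sum of summands. Because every ideal in any quotient contains a perfect summand, no nonzero ideal of a quotient is solvable.
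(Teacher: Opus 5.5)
Your proof is correct and is essentially the paper's argument: an infinite direct sum of copies of a simple non-abelian algebra fails the minimal condition on ideals, while every quotient is again such a sum and so has no nonzero solvable ideals. Your write-up is tighter than the paper's in three ways. First, your chain $\bigoplus_{n\ge k}S_n$ genuinely descends, whereas the paper's $\bigoplus_{i=1}^{s}\mathcal{H}_i$ ascends. Second, you prove, rather than assume, that every ideal is a sum of summands. Third, choosing $S$ to be a simple Lie algebra such as $\mathfrak{sl}_2$ removes an ambiguity in the paper: a Leibniz-simple $\mathcal{H}$ with $Leib(\mathcal{H})\neq 0$ would make $\bigoplus_{n\ge k}Leib(\mathcal{H}_n)$ a non-terminating chain of abelian ideals.
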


\begin{proof}
Clearly, $\min\mbox{-}\vartriangleleft$ is  a subset of
$\left(\min\mbox{-}\vartriangleleft{E}\mathfrak{U}\right)^{Q}.$ So it
remains to show that this inclusion is strict. Let  $\LieH$ be a simple
non-abelian Leibniz algebra over an arbitrary field of characteristic different to two and
let  $\{\LieH_{i}: i\geqslant 1\} $ be an infinite family of isomorphic
copy of $\LieH.$ Set $\Lieg=\oplus_{i=1}^{\infty}\LieH_{i},$ then $\Lieg$
is an infinite dimensional Leibniz algebra. Considering the
decreasing sequence, $\{J_{s}= \oplus_{i=1}^{s}\LieH_{i}, s\geqslant
1\}, $ of ideals of $\Lieg,$ one concludes that $\Lieg \notin
\min\mbox{-}\lhd$. Next, we show that $\Lieg\in
\left(\min\mbox{-}\vartriangleleft{E}\mathfrak{U}\right)^{Q}.$ Indeed, set $\LieI:= \oplus_{i\in A}\LieH_{i},$ with $A$ a
nonempty subset of $\{i\in \mathbb{N}: i\geqslant 1\}.$ $\LieI$ is
an ideal of $\Lieg$ and $\Lieg/\LieI \cong \underset{i \in
\mathbb{N}-A}{\oplus}\LieH_{i}$. So $\Lieg/\LieI \in \min\mbox{-}\lhd E
\mathfrak{U}$ and $\Lieg \in (\min\mbox{-}\lhd E \mathfrak{U})^{Q}$.
\end{proof}

\begin{Le}
    $\left(\min\mbox{-}\vartriangleleft E\mathfrak{U}\right)^{Q}=\left(\min\mbox{-}\vartriangleleft\mathfrak{U}\right)^{Q}$.
\end{Le}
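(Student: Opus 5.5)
Write $\mathfrak{X}:=\min\mbox{-}\vartriangleleft E\mathfrak{U}$ and $\mathfrak{Y}:=\min\mbox{-}\vartriangleleft\mathfrak{U}$. The plan is to prove the two inclusions $\mathfrak{X}^{Q}\subseteq\mathfrak{Y}^{Q}$ and $\mathfrak{Y}^{Q}\subseteq\mathfrak{X}^{Q}$ separately. Since the operator $Q$ only asks that every quotient lie in the given class, it suffices to show that the two classes of \emph{quotients} coincide. Concretely, I will show that a Leibniz algebra all of whose quotients lie in one class also has all of its quotients in the other.

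For $\mathfrak{X}^{Q}\subseteq\mathfrak{Y}^{Q}$ I would use Remark \ref{22}(2). Every abelian ideal is solvable, so a descending chain of abelian ideals of $\Lieg/I$ is in particular a descending chain of solvable ideals. If $\Lieg/I\in\mathfrak{X}$, that chain therefore stabilizes, which gives $\Lieg/I\in\mathfrak{Y}$. Applying this to every ideal $I$ yields the inclusion.

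For $\mathfrak{Y}^{Q}\subseteq\mathfrak{X}^{Q}$, let $\Lieg\in\mathfrak{Y}^{Q}$ and fix an ideal $I$. Note first that $\mathfrak{Y}^{Q}$ is closed under quotients, because a quotient of $\Lieg/I$ is again a quotient of $\Lieg$; so we may replace $\Lieg$ by $\Lieg/I$ and show that $\Lieg$ itself satisfies the minimal condition on solvable ideals. Take a descending chain $S_{1}\supseteq S_{2}\supseteq\cdots$ of solvable ideals of $\Lieg$, and argue by induction on the derived length $d$ of $S_{1}$; this is legitimate because every $S_{i}$ has derived length at most $d$.
\begin{itemize}
\item If $d=1$, every $S_{i}$ is abelian and the chain stabilizes because $\Lieg\in\mathfrak{Y}$.
\item If $d>1$, consider the chain $S_{i}^{(d-1)}$. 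Each term is an ideal of $\Lieg$, since derived terms of ideals are ideals in the Leibniz setting (by the Leibniz identity applied on both sides). Each term is also abelian and contained in $S_{1}^{(d-1)}$. Hence this chain stabilizes at some $A:=S_{k}^{(d-1)}$.
\item Pass to $\bar\Lieg=\Lieg/A\in\mathfrak{Y}^{Q}$. There the images of $S_{i}$ for $i\ge k$ have derived length at most $d-1$, so by the induction hypothesis their chain stabilizes: $S_{i}+A=S_{j}+A$ for all large $i,j$.
\item Since $A\subseteq S_{i}$ for $i\ge k$, this gives $S_{i}=S_{j}$, and the original chain stabilizes.
\end{itemize}

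I expect the main obstacle to be the inductive step. The difficulty is not the chain argument itself but checking two things. The first is that the terms $S_{i}^{(d-1)}$ really are two-sided ideals of the whole algebra $\Lieg$, not merely of $S_{i}$. The second is that $A\subseteq S_{i}$ holds for all $i\ge k$, which is needed to lift stabilization from the quotient. The inclusion holds because $S_{i}\supseteq S_{i}^{(d-1)}=A$ once the derived chain has stabilized, so the lift is valid. For the first point I would first try to verify directly from the left Leibniz identity that $[\Lieg,[J,J]]$ and $[[J,J],\Lieg]$ lie in $[J,J]$ whenever $J\lhd\Lieg$. If a one-sided subtlety appears there, I would replace $S^{(d-1)}$ by the ideal of $\Lieg$ it generates and check that this ideal is still abelian.
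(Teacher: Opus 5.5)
Your proof is correct and follows the paper's argument: the easy inclusion comes from Remark \ref{22}, and the reverse is an induction on derived length that stabilizes the abelian chain $S_i^{(d-1)}$, passes to $\Lieg/A$, and lifts back using $A\subseteq S_i$. Your worry about ideals resolves cleanly, since for $J\lhd\Lieg$ the left Leibniz identity gives $[g,[j_1,j_2]]=[[g,j_1],j_2]+[j_1,[g,j_2]]$ and $[[j_1,j_2],g]=[j_1,[j_2,g]]-[j_2,[j_1,g]]$, both in $[J,J]$; you also make explicit two points the paper glosses over, namely the quotient-closure of $\mathfrak{Y}^{Q}$ and the reduction from solvable ideals to bounded derived length.
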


\begin{proof}
Observe that  $\left(\min\mbox{-}\vartriangleleft
E\mathfrak{U}\right)^{Q} \subset
\left(\min\mbox{-}\vartriangleleft\mathfrak{U}\right)^{Q}$ since
$\min\mbox{-}\vartriangleleft E\mathfrak{U} \subset
\min\mbox{-}\vartriangleleft\mathfrak{U}$ by Remark \ref{22}. For the reverse inclusion, It is enough  to prove that  $(\min\mbox{-}\lhd \mathfrak{U})^{Q} \subseteq (\min\mbox{-}\lhd
\mathfrak{U}^{s})^{Q}$ for all $s
\geqslant 1$.  We use induction on $s$. Obviously,
this inclusion holds for $s=1$. Assume that
it holds for $s$  and let $\Lieg \in
(\min\mbox{-}\lhd \mathfrak{U})^{Q}.$ Consider
    $I_{0} \supseteq I_{1} \supseteq I_{2} \supseteq \cdots$
to be a decreasing chain of $\mathfrak{U}^{s+1}$ ideals of $\Lieg$.
For all $m \in \mathbb{N}$, $I_{m}^{(s)}$ is an ideal of $\Lieg$. In
addition, since $I_{m} \in \mathfrak{U}^{s+1}$, then $I_{m}^{(s)}
\in \mathfrak{U}$. Hence
    $I_{0}^{(s)} \supseteq I_{1}^{(s)} \supseteq I_{2}^{(s)} \supseteq \cdots$
is a descending chain of abelian ideals of $\Lieg$. Since $\Lieg \in
(\min\mbox{-}\lhd \mathfrak{U})^{Q}$, then $\Lieg \in \min\mbox{-}\lhd
\mathfrak{U},$ and thus this chain terminates. Hence there exists
$r_{0} \in \mathbb{N}$ such that
    $I_{m}^{(s)}=I_{r_{0}}^{(s)}$ for all $m \geqslant r_{0}.$
Set $I=I_{r_{0}}^{(s)}$. Then
    $\frac{I_{r_{0}}}{I} \supseteq \frac{I_{r_{0}+1}}{I} \supseteq \frac{I_{r_{0}+2}}{I} \supseteq \cdots$
is a descending chain of $\mathfrak{U}^{s}$ ideals of
$\frac{\Lieg}{I}$. By induction $\Lieg \in (\min\mbox{-}\lhd
\mathfrak{U})^{Q} \subseteq (\min\mbox{-}\lhd \mathfrak{U}^{s})^{Q}$, so
the chain terminates. Hence there exists $n_{0} \in \mathbb{N}$ such
that
    $\frac{I_{n}}{I}=\frac{I_{n_{0}}}{I}$ for all $n \geqslant n_{0}.$
So $I_{n}=I_{n_{0}}$, for all $n \geqslant n_{0}$. Hence the chain
    $I_{0} \supseteq I_{1} \supseteq I_{2} \supseteq \cdots \supseteq I_{r_{0}} \supseteq I_{r_{0}+1} \supseteq \cdots \supseteq I_{n}=I_{n_{0}}=I_{n_{0}+1}=\cdots$
terminates and so $(\min\mbox{-}\lhd \mathfrak{U})^{Q}=(\min\mbox{-}\lhd
\mathfrak{U}^{s})^{Q}$ for all $s \in \mathbb{N}$.
\end{proof}\\

\begin{Th}
 A quasi-Artinian Leibniz algebra is  Artinian if and only if every quotient algebra satisfies
 the minimal condition for abelian ideals.\\ This is symbolically equivalent to $$\min\mbox{-}\lhd =(\min\mbox{-}\lhd \mathfrak{U})^{Q} \cap (q \min\mbox{-}\lhd ).$$
\end{Th}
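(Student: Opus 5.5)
The inclusion $\min\mbox{-}\lhd \subseteq (\min\mbox{-}\lhd \mathfrak{U})^{Q} \cap (q \min\mbox{-}\lhd )$ is the easy direction, so I will dispose of it first. An Artinian Leibniz algebra is quasi-Artinian by the Remark following the definition of quasi-Artinian algebras. Every quotient $\Lieg/J$ of an Artinian algebra is again Artinian, since ideals of $\Lieg/J$ correspond to ideals of $\Lieg$ containing $J$. In particular each quotient satisfies the minimal condition on abelian ideals.

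For the reverse inclusion, let $\Lieg \in (\min\mbox{-}\lhd \mathfrak{U})^{Q} \cap q\min\mbox{-}\lhd$ and let $I_1\supseteq I_2\supseteq\cdots$ be a descending chain of ideals of $\Lieg$. By the preceding Lemma, $\Lieg \in (\min\mbox{-}\lhd E\mathfrak{U})^{Q}$, so every quotient of $\Lieg$ satisfies the minimal condition on solvable ideals. The plan is to split the chain into a solvable part and a part controlled by quasi-Artinianity.

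First I will use (i)$\Rightarrow$(ii) of the characterization theorem. Since $\Lieg$ is quasi-Artinian, the derived series stabilizes, $\Lieg^{(s)}=\Lieg^{(s+k)}$ for all $k\geq 0$. With $m$ as in (ii), the chains $[\Lieg^{(m)},I_n]$ and $[I_n,\Lieg^{(m)}]$ terminate. Hence there is $t$ such that $K:=[\Lieg^{(m)},I_n]+[I_n,\Lieg^{(m)}]$ is independent of $n\geq t$.

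Next I will pass to $\bar\Lieg=\Lieg/K$. For $n\geq t$ we have $K\subseteq I_n$, and the images $\bar I_n=I_n/K$ form a descending chain of ideals of $\bar\Lieg$. Each $\bar I_n$ commutes on both sides with $\bar\Lieg^{(m)}$. So $\bar I_n\cap\bar\Lieg^{(m)}$ is abelian, and $\bar I_n/(\bar I_n\cap \bar\Lieg^{(m)})$ embeds in $\bar\Lieg/\bar\Lieg^{(m)}$, which is solvable of length $\leq m$. Thus each $\bar I_n$ is solvable of derived length $\leq m+1$.

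Since $\bar\Lieg$ is a quotient of $\Lieg$, it satisfies the minimal condition on solvable ideals. Hence the chain $\bar I_n$ terminates, and because all $I_n$ with $n\geq t$ contain $K$, the chain $I_n$ terminates as well. This proves $\Lieg$ is Artinian.

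The main obstacle is the solvability of $\bar I_n$. The inclusion $\bar I_n^{(m+1)}\subseteq[\bar I_n\cap\bar\Lieg^{(m)},\bar I_n\cap\bar\Lieg^{(m)}]=0$ needs care, since $\bar I_n^{(m)}\subseteq \bar\Lieg^{(m)}\cap\bar I_n$ and this intersection is abelian because its elements annihilate $\bar\Lieg^{(m)}$ on both sides. I expect this to go through directly, but I will check that $K$ is a two-sided ideal. That uses the Leibniz identity to show $[\Lieg^{(m)},I_n]$ is an ideal when both factors are ideals. If that fails, I would replace $K$ by the ideal it generates, which is still contained in $I_n$.
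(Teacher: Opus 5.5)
Your proof is correct, but it is organized differently from the paper's. The paper splits the chain along the perfect ideal $I=\Lieg^{(\omega)}=\Lieg^{(d)}$:
\begin{itemize}
\item quasi-Artinianity (with $\Lieg^{(m)}=I$) makes the tail of $(J_n\cap I)$ abelian modulo $J=\bigcap_i(J_i\cap I)$, so the minimal condition on abelian ideals of $\Lieg/J$ stabilizes it;
\item the images $(J_n+I)/I$ are solvable ideals of the solvable quotient $\Lieg/I$, and they stabilize by the Lemma;
\item the two pieces are glued by the tacit modular-law fact that $A\subseteq B$, $A\cap I=B\cap I$ and $A+I=B+I$ force $A=B$.
\end{itemize}
You make a single reduction instead. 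Modulo $K$, the whole tail of the chain satisfies $[\bar\Lieg^{(m)},\bar I_n]=[\bar I_n,\bar\Lieg^{(m)}]=0$, so every $\bar I_n$ is solvable of derived length at most $m+1$. One application of the minimal condition on solvable ideals of $\Lieg/K$ then finishes. This removes the gluing step and never needs the derived series to stabilize. The paper's route, in exchange, uses the minimal condition on solvable ideals only in $\Lieg/I$ and treats the perfect part with abelian ideals alone.

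\textbf{The ideal check and the derived-length step.} Your ``main obstacle'' is harmless. By monotonicity, $\bar I_n^{(m)}\subseteq\bar\Lieg^{(m)}$, so $\bar I_n^{(m+1)}\subseteq[\bar\Lieg^{(m)},\bar I_n]=0$ directly. Also, $K$ is a two-sided ideal. For ideals $A,B$, the identities $[x,[a,b]]=[[x,a],b]+[a,[x,b]]$ and $[[a,b],x]=[a,[b,x]]-[b,[a,x]]$ show that $[A,B]+[B,A]$ is closed under bracketing with $\Lieg$ on either side.

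\textbf{Avoiding (ii).} You can drop the appeal to (ii) of the characterization theorem, and there are reasons to. That theorem is stated only in characteristic $\neq 2$, and the paper justifies its right-hand half only by ``similarly''. With $G=\Lieg^{(s+1)}$ the perfect term, the identity that proves the left half yields on the right only $[I_t,G]\subseteq[I_k,G]+[G,I_k]$. That is stabilization of a sum like your $K$, which happens to be all you use. The definition already gives $t$ with $[\Lieg^{(t)},I_t]+[I_t,\Lieg^{(t)}]\subseteq L:=\bigcap_n I_n$. Running your argument in $\Lieg/L$ shows that $(I_n/L)_{n\geq t}$ is a descending chain of solvable ideals of derived length at most $t+1$, which terminates by the Lemma.
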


\begin{proof}
Consider $\Lieg \in \min\mbox{-}\lhd$ and notice that  $\Lieg \in
(\min\mbox{-}\lhd\mathfrak{U})\cap (q \min\mbox{-}\lhd ).$ Consider
$$
    \frac{J_{0}}{J} \supseteq \frac{J_{1}}{J} \supseteq \frac{J_{2}}{J} \cdots
$$
 a descending chain of abelian ideals of $\frac{\Lieg}{J}$ with $J$ being an arbitrary ideal of $\Lieg.$
This descending chain induces the following descending chain of
ideals of $\Lieg$
$$
    J_{0} \supseteq J_{1} \supseteq J_{2} \supseteq \cdots.
$$
 Since $\Lieg \in \min\mbox{-}\lhd $, then the chain
$(2)$ terminates. Hence, there exists $m_{0} \in \mathbb{N}$ such
that $J_{m}=J_{m_{0}}$, for all $m \geqslant m_{0}$. So,
$\frac{J_{m}}{J}=\frac{J_{m_{0}}}{J}$ for all $m \geqslant m_{0}$.
Hence the chain $(1)$ terminates and so $\frac{\Lieg}{J} \in \min\mbox{-}\lhd \mathfrak{U}$.
Therefore $\Lieg \in (\min\mbox{-}\lhd \mathfrak{U})^{Q}$. Hence $\Lieg \in (\min\mbox{-}\lhd \mathfrak{U})^{Q} \cap (q \min\mbox{-}\lhd )$.\\
Conversely, let $\Lieg \in (\min\mbox{-}\lhd \mathfrak{U})^{Q} \cap (q \min\mbox{-}\lhd)$ and set $I=\Lieg^{(w)}$ where
 $w$ the smallest limit ordinal. For some $d<w$, we have $I=\Lieg^{(d)}$. Consider the descending chain
$$
    J_{0} \supseteq J_{1} \supseteq J_{2} \supseteq \cdots
$$
of ideals of $\Lieg$. Then
    $I\cap J_{0} \supseteq I\cap J_{1} \supseteq I\cap J_{2} \supseteq \cdots$
is a descending chain of ideals of $\Lieg$.
 As $\Lieg \in q \min\mbox{-}\lhd $, there exists $m \in \mathbb{N}$ such that $[I,I\cap J_{m}] \subseteq \bigcap_{i\geq 0}(J_{i} \cap I)$ and
 $[I \cap J_{m},I] \subseteq \bigcap_{i\geq 0}(J_{i} \cap I)$.  Setting $J:= \bigcap_{i\geq 0}(J_{i} \cap I),$
we have
 $(J_{m+j} \cap I)^{(1)}\subseteq [I,I \cap J_{m}] \subseteq J$ for all $j\geqslant 0.$
  Consider the subchain of ideals of $(3),$
\begin{center}
    $J_{m} \supseteq J_{m+1} \supseteq J_{m+2} \supseteq \cdots$.
\end{center}
So,  for all $i \geqslant 1$, we have
$\frac{J_{m}\cap I}{J} \supseteq \frac{J_{m+1}\cap I}{J} \supseteq \frac{J_{m+2}\cap I}{J} \supseteq \cdots$
a descending chain of abelian ideals of $\Lieg/J.$ Since $\Lieg/J \in \min\mbox{-}\lhd
 E\mathfrak{U}.$
  Then this chain of abelian ideals terminates. That is, there exits $m_{0}$ such that $\frac{{J_m}\cap I}{J}=\frac{J_{m_{0}}\cap I}{J}$,
   for all $ m \geqslant m_{0} \geqslant i$.
  Hence $J_{m}\cap I=J_{m_{0}}\cap I$ for all $m \geqslant m_{0}.$ Observe on the other hand that $\frac{\Lieg}{I}$ is solvable,
  so the ideals of $\Lieg$ of the following descending chain \begin{center}
    $\frac{J_{m}+ I}{I} \supseteq \frac{J_{m+1}+ I}{I} \supseteq \frac{J_{m+2}+ I}{I} \supseteq \cdots$
\end{center}  are
  solvable and since $\Lieg \in (\min\mbox{-}\lhd \mathfrak{U})^{Q},$ this
  chain also terminates. That is, there exits $m_{1}$ such that $J_{m_{1}}+ I= J_{m_{1}+ r}+
  I$ for all $r\geq 0.$ Setting $p= max\{m_0, m_1\},$ we obtain that
  the descending chain $(3)$ terminates also. Thus $\Lieg \in \min\mbox{-}\lhd$.
\end{proof}

The following statement is an immediate consequence of the above
result.

\begin{Co}
  Let $\mathfrak{X}$ be a class of Leibniz algebras. If $\mathfrak{X}$ is $Q$-closed and $\mathfrak{X} \subseteq \min\mbox{-}\lhd \mathfrak{U}$, then $(q \min\mbox{-}\lhd \cap \mathfrak{X}) \subseteq \min\mbox{-}\lhd$.
\end{Co}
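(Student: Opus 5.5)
The plan is to deduce the statement directly from the preceding Theorem, which asserts $\min\mbox{-}\lhd =(\min\mbox{-}\lhd \mathfrak{U})^{Q} \cap (q \min\mbox{-}\lhd )$. It therefore suffices to show that under the two hypotheses on $\mathfrak{X}$ one has $\mathfrak{X}\subseteq (\min\mbox{-}\lhd \mathfrak{U})^{Q}$.

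First, fix $\Lieg\in q\min\mbox{-}\lhd\cap\mathfrak{X}$ and an arbitrary two-sided ideal $J\lhd\Lieg$. Since $\mathfrak{X}$ is $Q$-closed, every quotient of a member of $\mathfrak{X}$ lies in $\mathfrak{X}$, so $\Lieg/J\in\mathfrak{X}$. By the hypothesis $\mathfrak{X}\subseteq\min\mbox{-}\lhd\mathfrak{U}$, we get $\Lieg/J\in\min\mbox{-}\lhd\mathfrak{U}$. As $J$ was arbitrary, this shows $\Lieg\in(\min\mbox{-}\lhd\mathfrak{U})^{Q}$ by the definition of the operator $Q$.

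Second, combine this with $\Lieg\in q\min\mbox{-}\lhd$. Then $\Lieg\in(\min\mbox{-}\lhd \mathfrak{U})^{Q}\cap q\min\mbox{-}\lhd$, which equals $\min\mbox{-}\lhd$ by the preceding Theorem. Hence $q\min\mbox{-}\lhd\cap\mathfrak{X}\subseteq\min\mbox{-}\lhd$.

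There is no real obstacle here. The only point needing care is how ``$Q$-closed'' is read: it must mean $\mathfrak{X}^{Q}$-type closure under quotients, that is, $\Lieg\in\mathfrak{X}$ implies $\Lieg/J\in\mathfrak{X}$ for every ideal $J$. Under that reading the inclusion $\mathfrak{X}\subseteq\mathfrak{X}^{Q}$ holds, and the second step transfers it to $(\min\mbox{-}\lhd\mathfrak{U})^{Q}$ through the monotonicity $\mathfrak{X}\subseteq\mathfrak{Y}\Rightarrow\mathfrak{X}^{Q}\subseteq\mathfrak{Y}^{Q}$. The substantive work is contained entirely in the preceding Theorem, which we invoke as given.
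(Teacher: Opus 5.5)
Your proposal is correct and follows the paper's route: the paper presents this as an immediate consequence of the preceding theorem $\min\mbox{-}\lhd =(\min\mbox{-}\lhd \mathfrak{U})^{Q} \cap (q \min\mbox{-}\lhd )$. You spell out the one implicit step, namely that $Q$-closedness together with $\mathfrak{X}\subseteq\min\mbox{-}\lhd\mathfrak{U}$ gives $\mathfrak{X}\subseteq(\min\mbox{-}\lhd\mathfrak{U})^{Q}$, and only the inclusion $\supseteq$ of that theorem is actually needed.
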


\section{ Chain Condition(CC) on ideals of ideals of Leibniz algebra.}
In this section, we investigate infinite dimensional Leibniz
algebras with chain condition(CC) on ideals of ideals.\\
\begin{De}
A Leibniz algebra $\Lieg$ is said to be semi-simple if
$Rad(\Lieg)=Leib(\Lieg).$ In particular, a semisimple Lie algebra is
a Lie algebra that does not contain a nonzero solvable ideal.
\end{De}

\begin{Pro}
Let $\Lieg$ be an infinite dimensional Leibniz algebra together with
the ascending or descending chain condition on ideals of ideals. Then every
solvable ideal $\LieK$ of $\Lieg$ is finitely generated.
\end{Pro}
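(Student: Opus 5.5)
We argue by induction on the derived length $d$ of the solvable ideal $\LieK$, i.e.\ on the least $d$ with $\LieK^{(d)}=0$. The key observation is that each term $\LieK^{(i)}$ of the derived series of $\LieK$ is an ideal of $\Lieg$. By the Leibniz identity, $[\LieK^{(i)},\LieK^{(i)}]$ is stable under left and right multiplication by elements of $\Lieg$ whenever $\LieK^{(i)}$ is. Consequently the successive quotients $\LieK^{(i)}/\LieK^{(i+1)}$ are abelian Leibniz algebras. Moreover, any subspace of $\LieK^{(i)}$ that contains $\LieK^{(i+1)}$ is an ideal of $\LieK^{(i)}$, hence an ideal of an ideal of $\Lieg$. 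In particular the quotient $\LieK^{(i)}/\LieK^{(i+1)}$ is a vector space on which the chain condition on ideals of ideals acts on all subspaces.

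\emph{Step 1 (abelian factors are finite dimensional).} Let $A=\LieK^{(i)}$ and $B=\LieK^{(i+1)}$. Every subspace $B\subseteq V\subseteq A$ satisfies $[A,V]+[V,A]\subseteq B\subseteq V$, so $V\lhd A\lhd\Lieg$. If $A/B$ were infinite dimensional, we could choose linearly independent vectors $v_1,v_2,\dots$ in $A/B$. Their spans give a strictly ascending chain of subspaces, and the codimension-type subspaces $\langle v_k,v_{k+1},\dots\rangle\oplus C$ (for a complement $C$ of $\langle v_1,v_2,\dots\rangle$) give a strictly descending chain. Lifting either chain to $A$ yields a non-stabilising chain of ideals of the ideal $A$, contradicting the ACC (respectively the DCC) hypothesis. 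Hence $\dim \LieK^{(i)}/\LieK^{(i+1)}<\infty$ for every $i$.

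\emph{Step 2 (assembling).} Since $\LieK^{(d)}=0$, the finite series $\LieK=\LieK^{(0)}\supseteq\LieK^{(1)}\supseteq\cdots\supseteq\LieK^{(d)}=0$ has finite-dimensional factors by Step 1. Therefore $\dim\LieK=\sum_{i<d}\dim\LieK^{(i)}/\LieK^{(i+1)}<\infty$. In particular $\LieK$ is finitely generated, even as a vector space, and one can take as generators the lifts of bases of the factors. The induction on $d$ is then only formal, since the argument already handles all factors uniformly.

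\emph{Main obstacle.} The step requiring the most care is Step 1. There one must check that $\LieK^{(i)}$ really is a two-sided ideal of $\Lieg$, and not merely of $\LieK$; this follows from the left Leibniz identity applied to $[x,[a,b]]$, together with the right-multiplication analogue $[[a,b],x]=[a,[b,x]]-[b,[a,x]]$. One must also interpret ``ideals of ideals'' as ideals of some ideal of $\Lieg$, so that the intermediate subspaces $V$ qualify. If the intended hypothesis only concerns ideals of ideals of the form $J\lhd I\lhd\Lieg$ with $I=\LieK^{(i)}$, the argument still goes through, because that is exactly the setting used above.
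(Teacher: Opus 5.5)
Your proof is correct and uses the same key idea as the paper: each abelian layer $\LieK^{(i)}/\LieK^{(i+1)}$ of the derived series is finite dimensional, because every subspace between $\LieK^{(i+1)}$ and $\LieK^{(i)}$ is an ideal of the ideal $\LieK^{(i)}$, so an infinite-dimensional layer would give a chain that never stabilises. The differences are only organizational. You bound all layers directly inside $\Lieg$, whereas the paper inducts via the abelian ideal $\LieK^{(s)}$ and the quotient $\Lieg/\LieK^{(s)}$, which tacitly requires the chain condition to pass to quotients. You also treat the descending case explicitly (the paper defers it to the Lie case) and check that $\LieK^{(i)}\lhd\Lieg$ (which the paper leaves implicit).
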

\begin{proof}
The proof of the case of Leibniz algebras satisfying the decreasing
chain condition is similar to the case of Lie
algebras done in \cite{OSBOR} . Consequently,  we focus our attention here on the ascending case and
proceed by induction on the solvability index
of $\LieK.$\\
Assume that $\LieK$ abelian, that is $[\LieK, \LieK]= 0.$ Consider
the sequence, $\{\LieK_{i}: i\geq 0\},$ of ideals of $\LieK$
constructed inductively as follows: $\LieK_{0}
=\langle\{e_{0}\}\rangle$ is the ideal of $\LieK$ spanned by $e_0,$ a
nonzero vector in $\LieK;$ $\LieK_{1} =\langle\LieK_{0}\cup
\{e_{1}\}\rangle,$ with  $e_1$ being a nonzero vector in $\LieK$ such that
$\{e_0, e_1\}$ is linearly independent. Suppose that $\LieK_{j}$ is
constructed as an ideal of $\LieK$ with basis $B_{j}=\{e_{i}\in
\LieK: 0\leq i\leq j\}$ and $\LieK_{j+1}= \langle\LieK_{j}\cup
\{e_{j+1}\}\rangle.$ We have $\LieK_{0}\subseteq
\LieK_{1}\subseteq\cdots \subseteq \LieK_{j}\subseteq
\LieK_{j+1}\subseteq\cdots \subseteq \LieK.$ Since $\Lieg$ satisfies
the maximal chain condition on ideal of ideals, then the above chain
terminates, and thus $\LieK$ is of finite dimension. Suppose that the
result is established for all solvability indexes $\leq s$ and let
us show it when this index is $s+1.$ We have
$\LieK^{(s+1)}=[\LieK^{(s)}, \LieK^{(s)}]= 0,$ and so $\LieK^{(s)}$ is a
solvable ideal.  From the previous lines, it follows that
$\mbox{dim}(\LieK^{(s)})< \infty.$ So $\frac{\LieK}{\LieK^{(s)}}$ is
a solvable ideal of the quotient Leibniz algebra
$\frac{\Lieg}{\LieK^{(s)}},$ with solvability index $s.$ In addition
$\frac{\Lieg}{\LieK^{(s)}}$ also verifies the maximal chain
condition, so by the inductive
hypothesis, $\mbox{dim}(\frac{\LieK}{\LieK^{(s)}})< \infty,$
Therefore $\mbox{dim}(\LieK)< \infty.$
\end{proof}
\begin{Rem}

\begin{enumerate}
\item
If $\Lieg$ is a semisimple Leibniz algebra, then $\Lieg\cong \A
\oplus Leib(\Lieg),$ where $\A$ is a semisimple Lie algebra and
$Leib(\Lieg)$ is an $\A$-module.

\item If $\Lieg$ is an infinite dimensional Leibniz algebra over an
arbitrary field of characteristic different to two, together with the ascending or descending chain
condition, then $\Lieg$ has a unique maximal solvable ideal $\LieK$ and the
quotient algebra $\frac{\Lieg}{\LieK}$ is an infinite dimensional
semisimple Lie algebra.
\end{enumerate}
\end{Rem}

\begin{De}
Let $\Lieg$ be a semisimple Leibniz algebra. The socle $S$ of
$\Lieg$ is defined as follows:  $S:= S_1 + S_2,$ where $S_1$ is a
sum of  simple ideals of $ \A$ and $S_2$ is a sum of simple
$\A$-submodules of $Leib(\Lieg)$.
\end{De}
\begin{Pro}
 Let $\Lieg$ be a semisimple Leibniz algebra
satisfying the descending chain condition. Then  $\Lieg$ is a direct
sum of a finite direct sum of simple ideals of $\A$ and  a direct
sum of simple
 $\A$-submodule of $Leib(\Lieg),$ i.e. there exists $\{S_i: 1\leq
i\leq n\} $ a finite set of simple ideals of $\A$ such that $\Lieg=
(\bigoplus_{i=1}^{i=n}S_{i}) \oplus S_2$
 with $S_2= Leib(\Lieg)$ direct sum of simple $\A$-submodules of $Leib(\Lieg)$.
\end{Pro}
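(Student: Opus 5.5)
We start from the decomposition of the Remark above: since $\Lieg$ is semisimple, $\Lieg\cong \A\oplus Leib(\Lieg)$, where $\A\cong\Lieg/Leib(\Lieg)$ is a semisimple Lie algebra and $Leib(\Lieg)$ is an $\A$-module. Recall that $Leib(\Lieg)$ is spanned by the squares $[x,x]$ and that $[Leib(\Lieg),\Lieg]=0$. The proof therefore splits into two independent parts: the Lie part $\A$ and the module part $Leib(\Lieg)$. For each part we use the descending chain condition on $\Lieg$ to obtain a finite (for $\A$) or semisimple (for $Leib(\Lieg)$) decomposition.

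\emph{Lie part.} Every ideal $\mathcal{B}$ of $\A$ gives an ideal $\mathcal{B}\oplus Leib(\Lieg)$ of $\Lieg$, namely its preimage under the projection $\Lieg\to\A$. Hence $\A$ satisfies the minimal condition on ideals. Choose a minimal ideal $S_1$ of $\A$. Since $\A$ has no nonzero solvable ideal, $S_1$ is non-abelian, so $[S_1,S_1]=S_1$. In characteristic $\neq 2$ we then show that $S_1$ has a complement: the centralizer $C_{\A}(S_1)$ is an ideal, and $S_1\cap C_{\A}(S_1)$ is an abelian ideal, hence zero. The key claim is $\A=S_1\oplus C_{\A}(S_1)$, and we argue it as follows. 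Apply the minimal condition to the family $\{C_{\A}(T_1\oplus\cdots\oplus T_k)\}$, where the $T_i$ run over minimal ideals. This produces a finite set $S_1,\dots,S_n$ of minimal ideals with $C:=C_{\A}(S_1\oplus\cdots\oplus S_n)$ minimal in that family. If $C\neq0$, then $C$ contains a minimal ideal $T$ of $\A$, and $C_{\A}(S_1\oplus\cdots\oplus S_n\oplus T)\subsetneq C$ because $T$ is non-abelian; this contradicts minimality. So $C=0$. Now $\A$ acts faithfully on $S_1\oplus\cdots\oplus S_n$, and each minimal ideal $S_i$ is simple, since an ideal of $S_i$ is stable under $\A$ because $S_i=[S_i,S_i]$. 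We then aim to show $\A=\bigoplus_{i=1}^n S_i$. If the derivation algebra issue makes this fail, we instead use the descending chain $\A\supseteq C_{\A}(S_1)\supseteq C_{\A}(S_1\oplus S_2)\supseteq\cdots$ together with $\A=S_1\oplus C_{\A}(S_1)$ (once established) and induct.

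\emph{Module part.} $Leib(\Lieg)$ is an ideal, and every $\A$-submodule $M$ of it is an ideal of $\Lieg$, since $[Leib(\Lieg),\Lieg]=0$ and $[\Lieg,M]=[\A,M]\subseteq M$. So the minimal condition holds for $\A$-submodules. To show $Leib(\Lieg)$ is a direct sum of simple $\A$-submodules, we use the definition $Rad(\Lieg)=Leib(\Lieg)$ together with the socle $S=S_1+S_2$. We take $S_2$ to be the sum of all simple $\A$-submodules; the minimal condition guarantees that every nonzero submodule contains a simple one. Complete reducibility over the semisimple $\A=\bigoplus S_i$ (finite-dimensional-type reasoning does not apply directly, so we argue via the socle) gives $S_2$ as a direct sum of simples. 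We then show $S_2=Leib(\Lieg)$: otherwise the quotient $Leib(\Lieg)/S_2$ would contain a simple submodule, and we aim to lift it to a complement, possibly using a Casimir-type or Whitehead-type splitting on each finite piece.

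\emph{Main obstacle.} The main obstacle is the infinite-dimensional splitting in both parts. In the Lie part it is showing that a minimal ideal $S_1$ is a direct summand, i.e. that no outer derivations occur. In the module part it is the complete reducibility $S_2=Leib(\Lieg)$. For the Lie part, we would first try the centralizer argument above, using the minimal condition to force finiteness. For the module part, we would fall back on taking $S_2$ to be the socle and proving $Leib(\Lieg)/S_2=0$ via the minimal condition.
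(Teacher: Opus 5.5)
Your proposal does not prove the statement. The two steps that carry all its content, the splitting of $\A$ and the complete reducibility of $Leib(\Lieg)$, are left as aims, and neither follows from the hypotheses you use.

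\emph{Lie part.} What you actually establish is that $\A$ has minimal ideals $S_1,\dots,S_n$ with $C_{\A}(S_1\oplus\cdots\oplus S_n)=0$. That is, $\A$ embeds in $\mathrm{Der}(S_1\oplus\cdots\oplus S_n)$ with $\bigoplus S_i$ as an ideal.

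Passing from there to $\A=\bigoplus S_i$, or to $\A=S_1\oplus C_{\A}(S_1)$ on which your fallback induction rests, is exactly the missing step. It does not follow from the minimal condition together with the absence of solvable ideals. Over a field of characteristic $0$, take the Lie algebra $\mathfrak{gl}_\infty$ of finitary $\mathbb{N}\times\mathbb{N}$ matrices. Its only ideals are $0\subset\mathfrak{sl}_\infty\subset\mathfrak{gl}_\infty$, so it has no nonzero solvable ideal and satisfies the chain condition on ideals and on ideals of ideals. Moreover $C_{\mathfrak{gl}_\infty}(\mathfrak{sl}_\infty)=0$. Yet $\mathfrak{gl}_\infty\neq\mathfrak{sl}_\infty$, because $\mathrm{ad}\,E_{11}$ is an outer derivation of $\mathfrak{sl}_\infty$.

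Your claim that a minimal ideal is simple because it is perfect also fails. In $(\mathfrak{sl}_2\otimes\mathbb{K}[t])\rtimes\mathbb{K}\frac{d}{dt}$ over characteristic $0$, the current algebra is the unique minimal ideal, but $\mathfrak{sl}_2\otimes t\mathbb{K}[t]$ is an ideal of it.

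The paper does not derive this step either: it imports $\A=\bigoplus S_i$ from [OSBOR, Lemma 4]. Viewed as a Leibniz algebra with $Leib=0$, $\mathfrak{gl}_\infty$ satisfies the hypotheses of the proposition under either reading of the chain condition, but not its conclusion. So any complete argument must use extra hypotheses. In finite dimension and characteristic $0$ the statement does hold, by Barnes' Levi theorem together with Weyl's theorem.

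\emph{Module part.} The minimal condition on $\A$-submodules only makes the socle $S_2$ essential in $Leib(\Lieg)$. It does not give $S_2=Leib(\Lieg)$, and no Casimir or Whitehead splitting is available for infinite-dimensional modules.

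Under your reading (minimal condition on ideals of $\Lieg$) this step is actually false. Let $M(1)$ be the Verma module of highest weight $1$ over $\mathfrak{sl}_2$ in characteristic $0$. Then $\Lieg=\mathfrak{sl}_2\oplus M(1)$ with $[(x,m),(y,n)]=([x,y],x\cdot n)$ is a left Leibniz algebra with $Leib(\Lieg)=Rad(\Lieg)=M(1)$. Its only ideals are $0\subset M(-3)\subset M(1)\subset\Lieg$. But $M(1)$ is a non-split extension of the $2$-dimensional simple module by $M(-3)$, so it is not a direct sum of simple submodules.

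Under the chain condition on ideals of ideals, which is what this section assumes, you are missing one ingredient. Since $[Leib(\Lieg),\Lieg]=0$, the ideal $Leib(\Lieg)$ is abelian, so all its subspaces are ideals of it and it must be finite-dimensional. Granting $\A=\bigoplus S_i$, $\A$ then acts through a finite-dimensional semisimple quotient, and in characteristic $0$ Weyl's theorem gives complete reducibility. This is presumably what lies behind the paper's ``the result follows''; your socle-based route cannot replace it.
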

\begin{proof}
Since $\Lieg$ is a semisimple Leibniz algebra, then  its associated
Lie algebra $\A$ is also semisimple. By [\cite{OSBOR}, Lemma 4],
there exists a finite set of  simple ideals of $\A$ such that $\A=
(\bigoplus_{i=1}^{i=n}S_{i})$ and the result follows.
\end{proof}

Using the notions of prime ideal and prime algebra defined in \cite{D_{1}}, we
investigate below the structure of Leibniz algebras with a certain
chain condition(CC) on subideals.

\begin{De}
\begin{enumerate}
\item [$(i)$] A Leibniz algebra $\Lieg$ is said to be prime if $[h_{1},h_{2}] \subseteq Leib(\Lieg)$ for $h_{1},h_{2}$ ideals of $\Lieg$, then $h_{1} \subseteq Leib(\Lieg)$ or $h_{2} \subseteq Leib(\Lieg)$.
\item [$(ii)$] A proper ideal $K$ of $\Lieg$ is said to be prime if $[h_{1},h_{2}] \subseteq K$ for $h_{1},h_{2}$ ideals of $\Lieg$, then $h_{1} \subseteq K$ or $h_{2} \subseteq K$.
\item [$(iii)$] A Leibniz algebra $\Lieg$ is said to be  semiprime if $[h,h] \subseteq Leib(\Lieg)$ for an ideal $h$ of $\Lieg$, then $h \subseteq Leib(\Lieg)$.
\item [$(iv)$] A proper ideal $I$ of $\Lieg$ is said to be semiprime if $[h,h] \subseteq I$ for an ideal $h$ of $\Lieg$, then $h \subseteq I$.
\item [$(v)$] An ideal $J$ of $\Lieg$ is said to be maximal if $J \subseteq I \subseteq \Lieg$ for some ideal I of $\Lieg$, then $J=I$ or $I=\Lieg$.
\item [$(vi)$]  Let $P$ and  $I$ be two ideals of $\Lieg$ such that $P$ is prime. P is said to be a minimal prime ideal belonging
to I, if $P\supset I$ and there is no prime ideal $P_{1}$ of $\Lieg$
such that $I \subset P_{1}\subsetneq P.$
\item [$(vii)$] A prime radical of an ideal $\LieH$ of $\Lieg$, denoted $Rad_{P}(\LieH),$ is the intersection of all minimal prime
ideals of $\Lieg$ belonging to $\LieH$. In particular the prime
radical of the zero ideal of $\Lieg$ will be denoted $Rad_{P}(0):=
Rad_{P}(\Lieg)$.

\end{enumerate}
\end{De}

\begin{Rem}
Let $\Lieg$ be a Leibniz algebra. One has the following assertions:
\begin{enumerate}
\item [$(i)$] $Rad_{P}(\LieH)$ is the intersection of all prime ideals of $\Lieg$ containing $\LieH$ (see \cite{Aldosray}, Theorem 2.1.5).
\item [$(ii)$] For any two ideals $\LieH, \LieK$ of  $\Lieg,$ $Rad_{P}(\LieH \cap \LieK)= Rad_{P}(\LieH) \cap Rad_{P}(\LieK)$.\\
In particular, if  $\LieH \subseteq \LieK \subseteq
Rad_{P}(\LieH),$ then $Rad_{P}(\LieH)= Rad_{P}(\LieK).$
\item [$(iii)$] For any ideal $\LieH,$ of
 $\Lieg,$ $Rad_{P}(Rad_{P}(\LieH)= Rad_{P}(\LieH).$
 \item[$(iv)$] $Rad_{P}(\frac{\Lieg}{Rad_{P}(\Lieg)})= (0),$ the zero
 ideal of $\frac{\Lieg}{Rad_{P}(\Lieg)}.$
\end {enumerate}
\end{Rem}
\begin{Pro}

Let $\Lieg$ be a Leibniz algebra satisfying the maximal chain
condition on ideals. For any ideal $\LieH$ of $\Lieg,$ there exists
only a finite number of prime ideals of $\Lieg,$ $\{P_{i}: 1\leq
i\leq m\},$ such that $Rad_{P}(\LieH)= \bigcap_{i=1}^{i=m} P_{i}.$

\end{Pro}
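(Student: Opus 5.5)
The plan is the classical Noetherian argument from commutative ring theory (the one showing that every radical ideal in a Noetherian ring is a finite intersection of primes), transferred to ideals of $\Lieg$. By Remark (i) above, $Rad_{P}(\LieH)$ is the intersection of all prime ideals of $\Lieg$ containing $\LieH$. So it suffices to show that $Rad_{P}(\LieH)$ is already the intersection of finitely many of these primes.

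We argue by contradiction using the maximal condition. Let $\mathcal{C}$ be the collection of ideals $\LieH$ of $\Lieg$ for which $Rad_{P}(\LieH)$ is not a finite intersection of prime ideals. If $\mathcal{C}\neq\emptyset$, the ascending chain condition on ideals means every nonempty family of ideals has a maximal element. Otherwise one could build a strictly ascending chain by repeatedly choosing a strictly larger member. Let $\LieH$ be a maximal member of $\mathcal{C}$.

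Next we show that $\LieH$ is itself prime, which gives the contradiction $Rad_{P}(\LieH)=\LieH=P_1$. First, $\LieH\neq\Lieg$, since we may regard $Rad_{P}(\Lieg)=\Lieg$ as the empty intersection, or handle this trivial case separately. Suppose $\LieH$ is not prime. Then there are ideals $h_1,h_2$ with $[h_1,h_2]\subseteq\LieH$, $h_1\not\subseteq\LieH$ and $h_2\not\subseteq\LieH$. Replacing $h_j$ by $h_j+\LieH$, we may assume $\LieH\subsetneq h_1$ and $\LieH\subsetneq h_2$, and still $[h_1,h_2]\subseteq\LieH$, because $[\LieH,\cdot]$ and $[\cdot,\LieH]$ lie in $\LieH$. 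By maximality, $Rad_{P}(h_1)$ and $Rad_{P}(h_2)$ are finite intersections of primes. We claim
\[Rad_{P}(\LieH)=Rad_{P}(h_1)\cap Rad_{P}(h_2).\]
The inclusion $\subseteq$ holds because any prime containing $h_j$ contains $\LieH$. For $\supseteq$, take a prime $P\supseteq\LieH$. Then $[h_1,h_2]\subseteq\LieH\subseteq P$, so by primeness $h_1\subseteq P$ or $h_2\subseteq P$. Hence $P$ contains $Rad_{P}(h_1)$ or $Rad_{P}(h_2)$, and therefore contains their intersection. It follows that $Rad_{P}(\LieH)$ is a finite intersection of primes, contradicting $\LieH\in\mathcal{C}$. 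One could alternatively invoke Remark (ii) applied to $h_1\cap h_2$, but the direct argument avoids needing $Rad_{P}(h_1\cap h_2)=Rad_{P}(\LieH)$.

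The main obstacle is the reduction step producing $h_1,h_2\supsetneq\LieH$ with $[h_1,h_2]\subseteq\LieH$. We must check that $h_j+\LieH$ are two-sided ideals and that the bracket stays in $\LieH$; both follow from bilinearity and $\LieH\lhd\Lieg$. We also need that a non-prime proper ideal admits such a pair, which is just the negation of definition (ii). Finally, once $Rad_{P}(\LieH)=\bigcap_{i=1}^{m}P_i$, discarding redundant $P_i$ lets us take the $P_i$ to be minimal primes belonging to $\LieH$ if desired. This matches the definition (vii) of the prime radical.
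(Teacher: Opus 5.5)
Your proof is correct and is essentially the paper's argument. In both, a non-prime $\LieH$ yields strictly larger ideals $h_1+\LieH$ and $h_2+\LieH$ whose bracket lies in $\LieH$, every prime containing $\LieH$ contains one of them, and the maximal condition stops this splitting from going on forever. The differences are in packaging only: you take a maximal counterexample and track whether $Rad_{P}$ is a finite intersection of primes (via Remark (i)), whereas the paper builds a non-terminating ascending chain and tracks infinitely many minimal primes, and you explicitly justify the splitting step $Rad_{P}(\LieH)=Rad_{P}(h_1)\cap Rad_{P}(h_2)$ that the paper only asserts.
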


\begin{proof}
Let $\Lieg$ be a Leibniz algebra  satisfying the maximal chain
condition on ideals and $\LieH$ be an arbitrary ideal of $\Lieg$. If
$\LieH$ is a prime ideal, the result follows.\\ Assume that $\LieH$
is not a prime ideal and that there is an infinite number of minimal
prime ideals of $\Lieg$ containing  $\LieH.$ Since $\LieH$ is non
prime, there exists two ideals $I, J$ of $\Lieg$, such that $[I,
J]\subseteq \LieH$
 with $I\nsubseteq \LieH$ and $J\nsubseteq \LieH.$ Observe that $[I + \LieH,
J+ \LieH]\subseteq \LieH$  and that  neither $I + \LieH$ nor $J+
\LieH$ is prime. Thus there is an infinite number of minimal prime
ideals of $\Lieg$ belonging at least to one of the following ideals
$I + \LieH$, $J+ \LieH.$ Assume that there is an infinite number of
minimal prime ideals of $\Lieg$ belonging to $I + \LieH,$ by using
the same arguments we obtain an ideal $\LieK$ such that   $\LieH
\subsetneq I + \LieH \subsetneq \LieK + I + \LieH.$ So  progressing
in the similar way, we show that there is an ascendant chain of
ideals, $\LieH \subsetneq I + \LieH \subsetneq \LieK + I + \LieH
\subsetneq \cdots,$ which does not terminates. This contradicts the
fact that $\Lieg$ satisfies the maximal chain condition.
\end{proof}
\begin{Pro}
If $\Lieg$ is a semisimple Leibniz algebra  satisfying the maximal
condition on ideals, then $Rad_{P}(\Lieg)= \mbox{Leib}(\Lieg)$.
\end{Pro}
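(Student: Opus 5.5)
We prove the two inclusions $\mbox{Leib}(\Lieg)\subseteq Rad_{P}(\Lieg)$ and $Rad_{P}(\Lieg)\subseteq \mbox{Leib}(\Lieg)$ separately. Throughout, $\mbox{Leib}(\Lieg)$ is the ideal spanned by the squares $[x,x]$, and since $\Lieg$ is semisimple we use the decomposition $\Lieg\cong \A\oplus \mbox{Leib}(\Lieg)$ from the Remark above, with $\A=\Lieg/\mbox{Leib}(\Lieg)$ a semisimple Lie algebra.

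\textbf{Inclusion $\mbox{Leib}(\Lieg)\subseteq Rad_{P}(\Lieg)$.} By part $(i)$ of the Remark on prime radicals, $Rad_{P}(\Lieg)$ is the intersection of all prime ideals of $\Lieg$. It therefore suffices to show that every prime ideal $P$ contains $\mbox{Leib}(\Lieg)$. We use the left Leibniz identity: it gives $[[x,x],z]=0$ for all $x,z\in\Lieg$, so $[\mbox{Leib}(\Lieg),\Lieg]=0\subseteq P$. Primeness of $P$, applied to the ideals $h_1=\mbox{Leib}(\Lieg)$ and $h_2=\Lieg$, then yields either $\mbox{Leib}(\Lieg)\subseteq P$ or $\Lieg\subseteq P$. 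The second option is excluded because $P$ is proper. This inclusion is routine.

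\textbf{Inclusion $Rad_{P}(\Lieg)\subseteq \mbox{Leib}(\Lieg)$.} We pass to the quotient. Every prime ideal of $\Lieg$ contains $\mbox{Leib}(\Lieg)$, so prime ideals of $\Lieg$ correspond bijectively to prime ideals of $\A$ via $P\mapsto P/\mbox{Leib}(\Lieg)$. Hence
\[
Rad_{P}(\Lieg)/\mbox{Leib}(\Lieg)=Rad_{P}(\A),
\]
and it suffices to prove $Rad_{P}(\A)=0$. The ideals of $\A$ are the images of ideals of $\Lieg$, so the maximal condition passes to $\A$. By the preceding Proposition, $Rad_{P}(\A)=P_1\cap\cdots\cap P_m$ for finitely many prime ideals $P_i$. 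Set $R=Rad_{P}(\A)$; it is an ideal of $\A$. We aim to show that $R$ is solvable. Semisimplicity of $\A$ (no nonzero solvable ideals) then forces $R=0$.

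\textbf{Main obstacle: solvability of $R$.} First try the standard ring-theoretic argument: show that $R^{(k)}=0$ for some $k$, using the maximal condition to control a chain built from the finite intersection of primes. The difficulty is that prime ideals need not force nilpotence without extra finiteness. The alternative, which I expect to be cleaner, avoids this. Since $\A$ is semisimple, $[R,R]=R$: otherwise $R/[R,R]$ would be a nonzero abelian quotient of an ideal, and an argument in the spirit of Osborn's Lemma 4 (used in the previous Proposition) would produce a nonzero abelian ideal. The plan is then to show that $\A$ is a finite direct sum of simple ideals $S_1\oplus\cdots\oplus S_n$, via the maximal condition in the same way that the descending version is obtained from \cite{OSBOR}. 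For each $j$, the ideal $\widehat{S_j}=\bigoplus_{i\neq j}S_i$ is prime. Indeed, if ideals $h_1,h_2$ satisfy $[h_1,h_2]\subseteq\widehat{S_j}$, then projecting onto the simple summand $S_j$ and using $[S_j,S_j]=S_j$ gives that one of the projections is zero. The intersection of these primes is zero, so $Rad_{P}(\A)=0$. The step that genuinely needs checking is the finite direct-sum decomposition of $\A$ under the maximal condition; I would first try to obtain it by building an ascending chain of sums of simple ideals and invoking the maximal condition.
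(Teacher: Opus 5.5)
Your first inclusion and the reduction to $\A=\Lieg/\mbox{Leib}(\Lieg)$ are fine. Using $[\mbox{Leib}(\Lieg),\Lieg]=0$ is slightly more direct than the paper, which instead shows that every solvable ideal lies in every prime. There is a genuine gap, though, in the step you yourself flagged.

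The claim that the maximal condition forces $\A$ to be a finite direct sum of simple ideals is false in general. Osborn's lemma is a consequence of the \emph{minimal} condition, not the maximal one. A counterexample: over a field $\mathbb{K}$ of characteristic $\neq 2$, take the current algebra $\mathfrak{sl}_2(\mathbb{K})\otimes\mathbb{K}[t]$.
\begin{itemize}
\item Its ideals are exactly the $\mathfrak{sl}_2(\mathbb{K})\otimes J$ with $J\lhd\mathbb{K}[t]$, so it satisfies the maximal condition.
\item For such an ideal the derived series is $\mathfrak{sl}_2(\mathbb{K})\otimes J^{2^k}\neq 0$ when $J\neq 0$, so there is no nonzero solvable ideal and the algebra is semisimple.
\item It has no minimal ideals at all, since $\mathfrak{sl}_2(\mathbb{K})\otimes tJ\subsetneq \mathfrak{sl}_2(\mathbb{K})\otimes J$. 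Hence it is not a finite direct sum of simple ideals.
\end{itemize}
The same example refutes your intermediate claim that $[R,R]=R$ for ideals of a semisimple algebra. For $R=\mathfrak{sl}_2(\mathbb{K})\otimes t\mathbb{K}[t]$ one has $[R,R]=\mathfrak{sl}_2(\mathbb{K})\otimes t^2\mathbb{K}[t]\neq R$. Nothing turns a nonzero abelian quotient $R/[R,R]$ into an abelian ideal of $\A$.

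The missing idea is the ``standard ring-theoretic argument'' you set aside. The maximal condition is exactly the finiteness you were worried about, but it is used through Noetherian induction, not through the decomposition $P_1\cap\cdots\cap P_m$. This is the paper's proof. Suppose $R=Rad_{P}$ is not solvable.
\begin{itemize}
\item The family of ideals $K$ with $R^{(n)}\not\subseteq K$ for all $n$ contains $0$, so by the maximal condition it has a maximal element $Q$.
\item Suppose ideals $A,B$ satisfy $[A,B]\subseteq Q$ with $A\not\subseteq Q$ and $B\not\subseteq Q$. Maximality gives $R^{(r)}\subseteq Q+A$ and $R^{(s)}\subseteq Q+B$ for some $r,s$.
\item With $u=\max\{r,s\}$ this yields $R^{(u+1)}\subseteq [Q+A,Q+B]\subseteq Q$, contradicting the choice of $Q$. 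So $Q$ is a (proper) prime ideal.
\item Then $R\subseteq Q$, because $R$ is the intersection of all primes. This contradicts $R\not\subseteq Q$.
\end{itemize}
Hence $R$ is solvable, so $R\subseteq Rad$, and semisimplicity finishes. The paper runs this directly in $\Lieg$ to get $Rad_{P}(\Lieg)\subseteq Rad(\Lieg)=\mbox{Leib}(\Lieg)$.
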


\begin{proof}
Let $\Lieg$ be a semisimple Leibniz algebra  satisfying the maximal
condition on ideals and consider $\LieI,$ a solvable ideal of
$\Lieg$ with solvability index  $n.$ We have  $\LieI^{(n)}=0$ and
therefore $\LieI\subseteq P$ for any $P,$ minimal prime ideal of
$\Lieg$. Hence the solvable Radical, $Rad(\Lieg)$,
 is an ideal of $Rad_{P}(\Lieg).$\\
 Conversely assume that $Rad_{P}(\Lieg)$ is not solvable and consider
 $\mathfrak{M}=\{\LieK \lhd \Lieg: Rad_{P}^{(n)}(\Lieg)\nsubseteq \LieK, \forall n\in \mathbb{N} \}.$
$\mathfrak{M}\neq \emptyset$ since the zero ideal belong to
$\mathfrak{M}.$ Moreover $\mathfrak{M}$ has a maximal element since
$\Lieg$ satisfies the maximal condition on ideals. Let $Q$ be such
element and assume that $Q$ is not prime , then there exists $A, B$
two ideals of $\Lieg$ such that $[A, B]\subseteq Q$ with
$A\nsubseteq Q$ and $B\nsubseteq Q.$ But $Q + A \supseteq Q,$  $Q +
B \supseteq Q$ and $Q$ maximal implies that  there exists $r$ and
$s$ such that $Rad_{P}^{(r)}(\Lieg)\subseteq Q + A$ and
$Rad_{P}^{(s)}(\Lieg)\subseteq Q + B.$ Set $u= \mbox{max}\{r, s\} $,
then we have $Rad_{P}^{(u+1)}(\Lieg)=[Rad_{P}^{(u)}(\Lieg),
Rad_{P}^{(u)}(\Lieg)]\subseteq [Q + A, Q + B]\subseteq Q.$ This
contradicts the fact that $Q\in \mathfrak{M}.$ Hence $Q$ is prime
and $Rad_{P}^{(n)}(\Lieg)\nsubseteq Q, \forall n\in \mathbb{N}$
implies that $Rad_{P}(\Lieg)\nsubseteq Q$ which contradicts the
definition of $Rad_{P}(\Lieg).$ So $Rad_{P}(\Lieg)\subseteq Rad(\Lieg).$\\
Finally since $\Lieg$ is semi-simple we have $Leib(\Lieg)=
Rad(\Lieg)=Rad_{P}(\Lieg).$
\end{proof}


{\bf Acknowledgements}\\ The first author is grateful to the
hospitality of Max-Planck-Institut f$\ddot{u}$r Mathematik in Bonn,
where this paper was completed.
\bigskip

\end{document}